\documentclass[11pt]{amsart}
\usepackage{amsmath}
\usepackage{etex}
\usepackage{amsfonts}
\usepackage{amssymb}
\usepackage{mathrsfs}
\usepackage{amsthm}
\usepackage{cite}
\usepackage{verbatim}
\usepackage{pictex}
\usepackage{dcpic}
\usepackage[margin=3.5cm]{geometry}

\newtheorem{Theorem}{Theorem}[section]
\newtheorem{Corollary}[Theorem]{Corollary}
\newtheorem{Lemma}[Theorem]{Lemma}
\newtheorem{Proposition}[Theorem]{Proposition}

\newtheorem*{Convention*}{Convention}
\newtheorem*{BasicPrinciple*}{Basic Principle}

\theoremstyle{definition}
\newtheorem{Definition}{Definition}[section]

\theoremstyle{remark}
\newtheorem{Remark}{Remark}[section]

\newcommand{\FF}{\mathcal{F}}
\newcommand{\GG}{\mathcal{G}}

\newcommand{\LL}{\mathcal{L}}
\newcommand{\MM}{\mathcal{M}}

\newcommand{\OO}{\mathcal{O}}

\newcommand{\SSS}{\mathcal{S}}
\newcommand{\TT}{\mathcal{T}}

\newcommand{\VV}{\mathcal{V}}

\newcommand{\SL}{\textnormal{SL}}

\newcommand{\complex}{\mathbb{C}}
\newcommand{\Grass}{\textnormal{Gr}}
\newcommand{\Rep}{\textnormal{Rep}}
\newcommand{\Hom}{\textnormal{Hom}}
\newcommand{\Ext}{\textnormal{Ext}}
\newcommand{\SI}{\textnormal{SI}}
\newcommand{\Spec}{\textnormal{Spec}}

\begin{document}
\title{Quiver Generalization of a Conjecture of King, Tollu, and Toumazet}
\author{Cass Sherman}
\address{Department of Mathematics, University of North Carolina at Chapel Hill, Phillips Hall, Chapel Hill, NC 27599}
\email{cas1987@email.unc.edu}
\begin{abstract}
Stretching the parameters of a Littlewood-Richardson coefficient of value $2$ by a factor of $n$ results in a coefficient of value $n+1$ \cite{KTT}\cite{Ikenmeyer}\cite{ShermanKTT}. We give a geometric proof of a generalization for representations of quivers.
\end{abstract}
\maketitle

\begin{section}{Introduction}\label{Introduction}
The Littlewood-Richardson coefficients $c^{\nu}_{\lambda,\mu}$ arise in the representation theory of the general linear group. They depend on tuples of nonnegative integers (weights) $\lambda$, $\mu$, and $\nu$. An operation called stretching can be performed in which all of the integers in the tuples $\lambda$, $\mu$, and $\nu$ are multiplied by $n$. The effect of this on the Littlewood-Richardson coefficient, that is, the function $P(n):=c^{n\nu}_{n\lambda,n\mu}$, has been studied by many. A number of new and existing conjectures on the behavior of $P$ were summarized by King et al. \cite{KTT}. We list some of these below:
\begin{itemize}
\item (Polynomiality Conjecture) $P$ is a polynomial.
\item (Saturation Conjecture) If $P(1)=0$, then $P(n)=0$ for all $n\geq 1$.
\item (Fulton's Conjecture) If $P(1)=1$, then $P(n)=1$ for all $n\geq 1$.
\item (KTT Conjecture) If $P(1)=2$, then $P(n)=n+1$ for all $n\geq 1$.
\end{itemize}    
The polynomiality conjecture was proven by Derksen and Weyman \cite{DerksenWeymanPolynomiality}. The first (combinatorial) proofs of the saturation and Fulton conjectures are due to Knutson, Tao, and Woodward \cite{KT}, \cite{KTW2}. Subsequent geometric proofs appeared from Belkale \cite{Bel06}, \cite{Bel07} and others, which allow for an arbitrary number of weights after symmetrizing. The KTT conjecture was proven combinatorially by Ikenmeyer \cite{Ikenmeyer} for three weights, and geometrically by the author \cite{ShermanKTT}, again symmetrizing and allowing for an arbitrary number of weights.

For $\alpha,\beta$ dimension vectors of a cycle-free quiver $Q$ with Ringel product $0$, the dimensions of the spaces of $\sigma_{\beta}$-semi-invariant functions $\SI(Q,\alpha)_{\sigma_{\beta}}$ on $\Rep(Q,\alpha)$ appear to exhibit the same behavior under stretching as the Littlewood-Richardson numbers (see Section \ref{QuiverPrelims} for notation and generalities on quiver representations). Thus, one can make the same assertions for the function $\widetilde{P}(n):=\dim\SI(Q,\alpha)_{\sigma_{n\beta}}$.
\begin{itemize}
\item (Polynomiality) $\widetilde{P}$ is a polynomial.
\item (Saturation) If $\widetilde{P}(1)=0$, then $\widetilde{P}(n)=0$ for all $n\geq 1$. 
\item (Fulton) If $\widetilde{P}(1)=1$, then $\widetilde{P}(n)=1$ for all $n\geq 1$.
\end{itemize}
All of the above were proven by Derksen and Weyman in the papers \cite{DerksenWeymanPolynomiality}, \cite{DerksenWeymanSemiInvariants}, and \cite{DerksenWeymanCombinatorics}, respectively, with input from Belkale on the last of these. It is well-known that the results for $\widetilde{P}$ imply those for $P$, the Littlewood-Richardson numbers coinciding with dimensions of spaces of semi-invariant functions for special choices of $Q$, $\alpha$, $\beta$ (see Section \ref{LWConnection} for one approach). The main object of this paper is to establish the corresponding quiver generalization of the KTT Conjecture. That is, we prove: 

\begin{Theorem}\label{QuiverKTT}
Let $\alpha$, $\beta$ be dimension vectors of $Q$, a quiver without oriented cycles, such that $\langle\alpha,\beta\rangle_Q=0$. If $\dim\SI(Q,\alpha)_{\sigma_\beta}=2$, then $\dim\SI(Q,\alpha)_{\sigma_{n\beta}}=n+1$ for all positive integers $n$.
\end{Theorem}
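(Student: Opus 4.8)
The plan is to translate the statement into projective geometry by Geometric Invariant Theory and then reduce it to a dimension estimate for a moduli space. By the standard GIT description of moduli of quiver representations (see Section~\ref{QuiverPrelims}) and the identification of weight spaces of semi-invariants with spaces of sections, for every integer $n\ge 0$ one has
\[
\dim\SI(Q,\alpha)_{\sigma_{n\beta}}=\dim H^0(M,\LL^{\otimes n}),
\]
where $M$ is the irreducible normal projective moduli space of $\sigma_\beta$-semistable representations of dimension $\alpha$ and $\LL$ is the ample line bundle attached to the character $\sigma_\beta$; the hypothesis $\langle\alpha,\beta\rangle_Q=0$ is precisely what makes $\sigma_\beta$ trivial on the scalars, so that such semi-invariants exist. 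Write $\widetilde P(n)$ for this common value. Because $Q$ has no oriented cycles, $\mathbb{C}[\Rep(Q,\alpha)]^{\mathrm{GL}(\alpha)}=\mathbb{C}$, so $\widetilde P(0)=1$; by the polynomiality theorem of Derksen and Weyman \cite{DerksenWeymanPolynomiality}, $\widetilde P$ is a polynomial; and multiplication by a fixed nonzero section of $\LL$, which exists since $\widetilde P(1)=2$, shows $\widetilde P$ is nondecreasing on $\mathbb{Z}_{\ge0}$. It therefore suffices to prove $\dim M\le 1$: then $\widetilde P$ has degree at most one, and the values $\widetilde P(0)=1$ and $\widetilde P(1)=2$ force $\widetilde P(n)=n+1$ for all $n$.

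To bound $\dim M$ I would establish the general inequality $\dim\SI(Q,\alpha)_{\sigma_\beta}\ge\dim M+1$, which gives $\dim M\le1$ at once. Both sides are controlled by the canonical decomposition $\alpha=m_1\gamma_1+\cdots+m_r\gamma_r$ of the general $\sigma_\beta$-semistable representation of dimension $\alpha$ into $\sigma_\beta$-stable summands of slope zero. Writing $M_\gamma$ for the moduli space of $\sigma_\beta$-stable representations of a $\sigma_\beta$-stable Schur root $\gamma$, so that $\dim M_\gamma=1-\langle\gamma,\gamma\rangle_Q$, one has $\dim M=\sum_i m_i\dim M_{\gamma_i}$, while the structure theory of quiver semi-invariants \cite{DerksenWeymanSemiInvariants} gives $\dim\SI(Q,\alpha)_{\sigma_\beta}=\prod_i\binom{d_i+m_i-1}{m_i}$ with $d_i=\dim\SI(Q,\gamma_i)_{\sigma_\beta}$, reflecting the decomposition of $M$ into a product of symmetric powers of the $M_{\gamma_i}$ and the identity $H^0(\mathrm{Sym}^m X,\LL^{(m)})=\mathrm{Sym}^m H^0(X,\LL)$. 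Since $\binom{a+m}{m}\ge 1+am$ for $a,m\ge 0$ and $\prod_i(1+x_i)\ge 1+\sum_i x_i$ for $x_i\ge 0$, the inequality $\dim\SI(Q,\alpha)_{\sigma_\beta}\ge\dim M+1$ follows from the single claim: for every $\sigma_\beta$-stable Schur root $\gamma$ with $\langle\gamma,\beta\rangle_Q=0$,
\[
\dim\SI(Q,\gamma)_{\sigma_\beta}\ \ge\ \dim M_\gamma+1\ =\ 2-\langle\gamma,\gamma\rangle_Q .
\]
(For the pieces with $d_i=1$ one may instead invoke the quiver form of Fulton's conjecture of Derksen, Weyman and Belkale \cite{DerksenWeymanCombinatorics}, together with ampleness of the polarization, to see directly that $M_{\gamma_i}$ is a point; in the end $\dim\SI(Q,\alpha)_{\sigma_\beta}=2$ forces the canonical decomposition to be multiplicity-free with exactly one $d_i$ equal to $2$ and the rest equal to $1$.)

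It remains to prove the claim, and this is where the real work lies. The case $\langle\gamma,\gamma\rangle_Q=1$ is trivial, $M_\gamma$ being a point; the case $\langle\gamma,\gamma\rangle_Q=0$ comes for free, since then $M_\gamma$ is a curve, so $\widetilde P_\gamma(n):=\dim\SI(Q,\gamma)_{\sigma_{n\beta}}$ is a nondecreasing polynomial of degree one with $\widetilde P_\gamma(0)=1$, hence of the form $an+1$ with $a\ge1$, giving $\widetilde P_\gamma(1)\ge2$. The remaining case $\langle\gamma,\gamma\rangle_Q\le-1$ — equivalently, excluding a quiver moduli space of dimension $\ge2$ whose natural polarization has only a two-dimensional space of sections, which is not automatic for a general polarized projective variety — is the crux and the main obstacle. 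I would attack it directly with Schofield's determinantal semi-invariants: for a general representation $W$ of dimension $\beta$ one analyzes the divisor $\{c^W=0\}=\{V:\Hom(V,W)\ne0\}$ on $M_\gamma$ and how it varies with $W$, and uses the Horn-type recursion governing the values of the Ringel form on subrepresentations of a general representation of dimension $\gamma$ to produce $2-\langle\gamma,\gamma\rangle_Q$ linearly independent such semi-invariants — in effect, to show that the linear system these divisors span on $M_\gamma$ has the expected dimension, and that in the extremal case $\langle\gamma,\gamma\rangle_Q=0$ one gets $M_\gamma\cong\Pone$ carrying $\mathcal{O}(1)$. Making the count come out exactly right — controlling the recursion finely enough, and using in an essential way that $Q$ is acyclic and that the Ringel form vanishes on $(\alpha,\beta)$ — is the technical heart of the argument, and is the quiver analogue of the moduli-of-bundles computation of \cite{ShermanKTT}.
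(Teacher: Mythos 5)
Your opening reduction is sound and coincides with the paper's: Proposition \ref{GITForQuivers} identifies $\dim\SI(Q,\alpha)_{\sigma_{n\beta}}$ with $h^0(Y_{\alpha,\beta},L_Y^{\otimes n})$, and Proposition \ref{QuiverTranslation} records exactly your observation that the theorem is equivalent to $\dim Y_{\alpha,\beta}=1$. But everything after that point is a genuine gap. You propose to get $\dim Y_{\alpha,\beta}\le 1$ from the general inequality $\dim\SI(Q,\alpha)_{\sigma_\beta}\ge\dim Y_{\alpha,\beta}+1$, reduced via the $\sigma_\beta$-stable decomposition to the claim that $\dim\SI(Q,\gamma)_{\sigma_\beta}\ge 2-\langle\gamma,\gamma\rangle_Q$ for every $\sigma_\beta$-stable $\gamma$ with $\langle\gamma,\beta\rangle_Q=0$ --- and you then explicitly defer the only hard case, $\langle\gamma,\gamma\rangle_Q\le-1$, to a sketched ``Horn-type recursion'' with Schofield semi-invariants. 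That deferred case is not a technical loose end; it is the entire theorem. Note also that your claim is strictly \emph{stronger} than Theorem \ref{QuiverKTT}: it makes no use of the hypothesis $\dim\SI(Q,\alpha)_{\sigma_\beta}=2$ and would bound $\deg\widetilde P$ by $\widetilde P(1)-1$ for every value of $\widetilde P(1)$, subsuming saturation, Fulton, and KTT simultaneously. No such general bound is available in the literature, and the analogous inequality $h^0(X,L)\ge\dim X+1$ is false for general polarized varieties (e.g.\ principally polarized abelian varieties), so the special geometry of quiver moduli would have to enter in an essential and currently unsupplied way. The intermediate structural assertions --- that $Y_{\alpha,\beta}$ is a product of symmetric powers of the $M_{\gamma_i}$ \emph{as polarized varieties}, so that the binomial product formula for $\dim\SI$ matches $\mathrm{Sym}^m H^0$ --- also need proof or a precise citation; they are plausible but not free.

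By contrast, the paper uses the hypothesis $\dim\SI(Q,\alpha)_{\sigma_\beta}=2$ at the very first step of the geometric argument: if $\dim Y_{\alpha,\beta}\ge2$, the two-dimensional linear system $|L_Y|$ must have a nonempty base locus, and one takes $Z$ to be the closure of a component of its preimage in $\Rep(Q,\alpha)^{\sigma_\beta-SS}$. For general $(V,W)\in Z\times\Rep(Q,\beta)$ one then has $\Hom_Q(V,W)\ne0$, and the contradiction is reached by showing $\Ext_Q(V,W)=0$ (hence $\Hom_Q(V,W)=0$, since $\langle\alpha,\beta\rangle_Q=0$). This vanishing is obtained by the dimension-count isomorphism $\Ext_Q(V,W)\cong\Ext_Q(S,W)$ of Proposition \ref{QuiverH1PropStatement} for $S=\ker\phi$ a general morphism $\phi:V\to W$, an explicit construction of an irreducible scheme $\mathbf{H}_{\alpha,\beta,\delta,\epsilon}$ dominating both $Z\times\Rep(Q,\beta)^{\times2}$ and $Z_{\delta,\epsilon}\times\Rep(Q,\beta)$ to guarantee the genericity needed to iterate, and the induction of Proposition \ref{QuiverFinalInduction}, which uses the semistability inequalities $\langle\dim R,\beta\rangle_Q\ge0$ for subrepresentations $R$ of $S$. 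If you want to salvage your outline, you would need to either prove your claim for stable $\gamma$ with $\langle\gamma,\gamma\rangle_Q\le-1$ (a substantial new result) or replace it with a base-locus argument of the above kind, which is where the hypothesis ``$=2$'' actually does its work.
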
  

Our approach proceeds through geometric invariant theory, following similar proofs in \cite{Bel07}, \cite{ShermanKTT}. Along the way, we prove by dimension counting a result of general interest, Proposition \ref{QuiverH1PropStatement}. It has the flavor of results from Schofield's paper \cite{Schofield}, in that it equates $\Ext_{Q}(V,W)$ with $\Ext_{Q}(S,W)$, where $S$ is a certain subrepresentation of $V$. 

In the last section, we show how to deduce the main result of the author's paper \cite{ShermanKTT} (restated as Corollary \ref{QuiverInvariantsGiveTensorInvariants} here) from Theorem \ref{QuiverKTT}. 
\end{section}

\begin{section}{Preliminaries and Notation on Quiver Representations}\label{QuiverPrelims}

A \textit{quiver} $Q$ consists of the data of a pair finite sets $Q_0$ and $Q_1$ of vertices and arrows between vertices, respectively, along with maps $h,t:Q_1\rightarrow Q_0$, where the head map $h$ associates to each arrow the vertex of its pointer, and the tail map $t$ associates to each arrow the vertex of its base. We will assume moreover that a quiver has no oriented cycles when regarded as a digraph. 

A \textit{dimension vector} $\alpha$ is a function $\alpha:Q_0\rightarrow\mathbb{N}\cup\{0\}$. A \textit{representation} of $Q$ of dimension vector $\alpha$ is defined to be an element $V$ of the set:
\begin{equation*}
\Rep(Q,\alpha):=\prod_{a\in Q_1}\Hom(\complex^{\alpha(ta)},\complex^{\alpha(ha)})
\end{equation*}
We will frequently regard $\Rep(Q,\alpha)$ as an affine variety, by the obvious identification with $\mathbb{A}^N$ for $N=\sum_a\alpha(ta)\alpha(ha)$.

If $V$ and $W$ are representations of $Q$ of dimension vectors $\alpha$ and $\beta$, respectively, then a morphism $\phi:V\rightarrow W$ of quiver representations is, for each $x\in Q_0$, a homomorphism of vector spaces $\phi(x):\complex^{\alpha(x)}\rightarrow\complex^{\beta(x)}$, where these must satisfy the commutativity property $\phi(ha)\circ V(a)=W(a)\circ\phi(ta)$ for every $a$ in $Q_1$. The vector space $\Hom_Q(V,W)$ of all morphisms of quiver representation is then the kernel of the map
\begin{equation*}
d^V_W=\oplus_{x\in Q_0}\Hom(\complex^{\alpha(x)},\complex^{\beta(x)})\rightarrow\oplus_{a\in Q_1}\Hom(\complex^{\alpha(ta)},\complex^{\beta(ha)})
\end{equation*}
which sends  $\{\phi(x)\}_{x\in Q_0}$ to the element $\{W(a)\circ\phi(ta)-\phi(ha)\circ V(a)\}_{a\in Q_1}$. 

Let $\Rep(Q)$ denote the category with representations of $Q$ (of any dimension vector) as objects and the above notion of morphism. It is an abelian category. For representations $V$ and $W$, one has $\Ext^1(V,W)=\textnormal{coker}(d^V_W)$, and there is no higher $\Ext$ in this category, so we simply denote this cokernel by $\Ext_Q(V,W)$.

The (in general, nonsymmetric) \textit{Ringel form} on the abelian group of functions $Q_0\rightarrow \mathbb{Z}$ is the bilinear form:
\begin{equation}\label{RingelForm}
\langle\alpha,\beta\rangle_Q=\sum_{x\in Q_0}\alpha(x)\beta(x)-\sum_{a\in Q_1}\alpha(ta)\beta(ha)
\end{equation}
It is clear that if moreover $\alpha$ and $\beta$ are dimension vectors, then $\langle\alpha,\beta\rangle_Q$ is the difference of the dimensions of the domain and codomain of $d^V_W$, whence
\begin{equation}\label{RingelFormAlt}
\langle\alpha,\beta\rangle_Q=\dim\Hom_Q(V,W)-\dim\Ext_Q(V,W)
\end{equation}
for any representations $V,W$ of dimensions $\alpha,\beta$. In particular, the right hand side of (\ref{RingelFormAlt}) does not depend on $V$ and $W$ beyond their dimension vectors.

The affine variety $\Rep(Q,\alpha)$ has a natural action of 
\begin{equation*}
\textnormal{GL}(Q,\alpha):=\prod_{x\in Q_0}\textnormal{Aut}(\complex^{\alpha(x)})
\end{equation*}
given by conjugation: $g=(A(x))_{x}$ sends $V=(V(a))_a$ to $gV=( A(ha)V(a)A(ta)^{-1})_a$. Let the subgroup $\SL(Q,\alpha)$ of $\textnormal{GL}(Q,\alpha)$ be the product of the determinant $1$ subgroups of each factor in the product defining $\textnormal{GL}(Q,\alpha)$. We are interested in the rings of semi-invariants
\begin{equation*}
\SI(Q,\alpha)=(H^0(\Rep(Q,\alpha),\OO))^{\SL(Q,\alpha)},
\end{equation*}
where $\OO$ is the structure sheaf. These decompose into direct sums of weight spaces, called spaces of \emph{$\sigma$ semi-invariants}:
\begin{equation*}
\SI(Q,\alpha)_{\sigma}=\{f\in H^0(\Rep(Q,\alpha),\OO):g\cdot f=\sigma(g)f\textnormal{ for all }g\in\textnormal{GL}(Q,\alpha)\},
\end{equation*} 
for $\sigma$ a multiplicative character of $\textnormal{GL}(Q,\alpha)$. Such a character must be a product over $Q_0$ of integral powers of the determinant characters on each factor of $\textnormal{GL}(Q,\alpha)$. A character $\sigma$ may therefore be identified with a function or weight (also called $\sigma$) $Q_0\rightarrow\mathbb{Z}$. Each such $\sigma$ defines a notion of semistability on $\Rep(Q,\alpha)$. 

\begin{Definition}\label{SigmaSemistability}
Given two weights $\sigma,\gamma:Q_0\rightarrow \mathbb{Z}$, one defines the evaluation of $\sigma$ at $\gamma$ to be
\begin{equation*}
\sigma(\gamma)=\sum_{x\in Q_0}\sigma(x)\gamma(x). 
\end{equation*}
A representation $V$ of $Q$ which satisfies $\sigma(\dim V)=0$ is said to be \emph{$\sigma$-semistable} if for every nonzero subrepresentation $S$ of $V$, one has $\sigma(\dim S)\leq 0$. The representation $V$ is \emph{$\sigma$-stable} if the inequality is always strict. 
\end{Definition}

To complete the notation for Theorem \ref{QuiverKTT}, we introduce the following definition.

\begin{Definition}\label{SigmaBeta}
For a dimension vector $\beta$ of $Q$, we define the function $\sigma_{\beta}:Q_0\rightarrow\mathbb{Z}$ by $\sigma_{\beta}(x)=-\beta(x)+\sum_{a:ta=x}\beta(ha)$. 
\end{Definition}
\begin{Remark}\label{SigmaBetaRemark}
Clearly $\sigma_{n\beta}=n\sigma_{\beta}$ for any positive integer $n$. Notice also that if $\alpha:Q_0\rightarrow\mathbb{Z}$ is a function, one has $\sigma_{\beta}(\alpha)=-\langle\alpha,\beta\rangle$.
\end{Remark}
\end{section}

\begin{section}{Translation via GIT}\label{QuiverGIT}
The goal of this section is to prove Proposition \ref{GITForQuivers}, which in turn gives Proposition \ref{QuiverTranslation}, the latter translating the main theorem \ref{QuiverKTT} into a form more adaptable to our geometric approach. Parts of \ref{GITForQuivers} are known from the literature (and are credited suitably below), but the author could not find a reference for the descent of the line bundle $L_{\sigma_{\beta}}$, hence its full proof here.

\begin{Proposition}\label{GITForQuivers}
Let $\alpha,\beta$ be dimension vectors for a quiver $Q$ without oriented cycles, such that $\langle \alpha,\beta\rangle_Q=0$. Let $\sigma_{\beta}:Q_0\rightarrow \mathbb{Z}$ be the associated weight (Definition \ref{SigmaBeta}). If $R^{SS}$ denotes the open set of $\sigma_{\beta}$-semistable points of $\Rep(Q,\alpha)$, then there is a good quotient $\pi:R^{SS}\rightarrow Y_{\alpha,\beta}$, where $Y_{\alpha,\beta}$ is an integral, projective $\complex$-variety with rational singularities (in particular, is normal). If $L_{\sigma_{\beta}}$ denotes the trivial line bundle $\Rep(Q,\alpha)\times\complex$ with $\textnormal{GL}(Q,\alpha)$-equivariant structure provided by $g\cdot(V,z)=(g\cdot V,\sigma_{\beta}(g^{-1})v)$ (now viewing $\sigma_{\beta}$ as a character, as in section \ref{QuiverPrelims}), then there exists an ample line bundle $L_{Y}$ on $Y_{\alpha,\beta}$ such that $\pi^*L_{Y}=L_{\sigma_{\beta}}|_{R^{SS}}$. Moreover, one has a canonical isomorphism $H^0(Y_{\alpha,\beta},L_Y^{\otimes n})= \SI(Q,\alpha)_{\sigma_{n\beta}}$. It follows from the saturation theorem of Derksen and Weyman \cite{DerksenWeymanSemiInvariants} that $Y_{\alpha,\beta}=\emptyset$ if and only if $\SI(Q,\alpha)_{\sigma_{\beta}}=0$.
\end{Proposition}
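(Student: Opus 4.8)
The plan is to assemble this from several standard GIT facts, with the one new ingredient being the descent of $L_{\sigma_\beta}$. First I would set up the linearization: the character $\sigma_\beta$ of $\textnormal{GL}(Q,\alpha)$ linearizes the $\textnormal{GL}(Q,\alpha)$-action on the trivial bundle over $\Rep(Q,\alpha)$ as described, and King's classical analysis (\emph{Moduli of representations of finite-dimensional algebras}) identifies the GIT-semistable locus for this linearization with the $\sigma_\beta$-semistable representations in the sense of Definition \ref{SigmaSemistability} — one checks the Hilbert--Mumford numerical criterion against one-parameter subgroups, which for quiver actions correspond exactly to filtrations of $V$, so that the weight inequality $\sigma_\beta(\dim S)\le 0$ for subrepresentations $S$ is precisely GIT-semistability. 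Since the one-dimensional torus sitting diagonally in $\textnormal{GL}(Q,\alpha)$ acts trivially on $\Rep(Q,\alpha)$ but nontrivially on the fiber unless $\sigma_\beta(\alpha)=0$, and $\sigma_\beta(\alpha)=-\langle\alpha,\beta\rangle_Q=0$ by hypothesis (Remark \ref{SigmaBetaRemark}), the linearization descends to a linearization for the action of the relevant quotient group; this is exactly why the normalization $\langle\alpha,\beta\rangle_Q=0$ is needed. Mumford's construction then produces the good quotient $\pi:R^{SS}\to Y_{\alpha,\beta}:=R^{SS}/\!\!/\textnormal{GL}(Q,\alpha)$ as a projective variety, projective because $\Rep(Q,\alpha)$ is affine and $Y_{\alpha,\beta}=\operatorname{Proj}$ of the graded ring $\bigoplus_{n\ge 0}\SI(Q,\alpha)_{\sigma_{n\beta}}$, together with an ample $L_Y$ with $\pi^*L_Y=L_{\sigma_\beta}|_{R^{SS}}$ and $H^0(Y_{\alpha,\beta},L_Y^{\otimes n})=(H^0(R^{SS},L_{\sigma_\beta}^{\otimes n}))^{\textnormal{GL}(Q,\alpha)}=\SI(Q,\alpha)_{\sigma_{n\beta}}$, the last equality because semi-invariants extend over the codimension-$\ge 2$ complement of $R^{SS}$ in the affine — in fact normal — space $\Rep(Q,\alpha)$.

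For integrality, I would note that $\Rep(Q,\alpha)$ is irreducible, $R^{SS}$ is a nonempty open subset when it is nonempty, and a good quotient of an irreducible variety is irreducible; $Y_{\alpha,\beta}$ is then an integral projective $\complex$-variety. The emptiness criterion is immediate from the identification $Y_{\alpha,\beta}=\operatorname{Proj}\bigoplus_n\SI(Q,\alpha)_{\sigma_{n\beta}}$: this is empty iff $\SI(Q,\alpha)_{\sigma_{n\beta}}=0$ for all $n\ge 1$, and by the saturation theorem of Derksen--Weyman \cite{DerksenWeymanSemiInvariants} the vanishing of $\SI(Q,\alpha)_{\sigma_\beta}$ forces the vanishing of $\SI(Q,\alpha)_{\sigma_{n\beta}}$ for all $n$. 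The rational-singularities (hence normality) claim is where I expect to draw on the literature most heavily: since $\Rep(Q,\alpha)$ is smooth (it is an affine space) and the group $\textnormal{GL}(Q,\alpha)$ is reductive, the good quotient $Y_{\alpha,\beta}$ has rational singularities by the theorem of Boutot \cite{Boutot} that GIT quotients of varieties with rational singularities by reductive groups again have rational singularities; normality then follows since rational singularities are normal.

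The step I expect to be the genuine obstacle — and the one the proposition singles out as lacking a reference — is the descent of $L_{\sigma_\beta}$ to an \emph{honest} line bundle $L_Y$ on $Y_{\alpha,\beta}$, as opposed to merely having its sections compute the graded ring. Kempf's descent lemma reduces this to checking that for every point $v\in R^{SS}$ whose orbit is closed in $R^{SS}$, the stabilizer $\textnormal{GL}(Q,\alpha)_v$ acts trivially on the fiber $(L_{\sigma_\beta})_v\cong\complex$, i.e. that the character $\sigma_\beta$ restricts trivially to every such stabilizer. This is the delicate point: one must show that for a $\sigma_\beta$-polystable representation $V$, every automorphism lies in the kernel of $\sigma_\beta$. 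I would argue this by decomposing a polystable $V$ into $\sigma_\beta$-stable summands $V=\bigoplus_i V_i^{\oplus m_i}$, noting that each $V_i$ satisfies $\sigma_\beta(\dim V_i)=0$ (being stable with $\sigma_\beta(\dim V)=0$, its stable summands each have slope $0$), so that the automorphism group — a product of general linear groups $\prod_i \textnormal{GL}_{m_i}$ acting through the multiplicity spaces — has every element evaluated by $\sigma_\beta$ as a product of powers of $\det$'s raised to exponents $\sigma_\beta(\dim V_i)=0$, hence trivially. Once Kempf descent applies, ampleness of $L_Y$ is automatic from the projectivity of the $\operatorname{Proj}$ together with $\pi^*L_Y=L_{\sigma_\beta}|_{R^{SS}}$ being the bundle whose sections are the graded pieces generating the ring, and the canonical isomorphism on sections follows by taking invariants and extending over the complement as above.
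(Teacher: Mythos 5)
Your proposal follows essentially the same route as the paper's proof: King's identification of GIT-semistability for the linearization $L_{\sigma_{\beta}}$ with $\sigma_{\beta}$-semistability, the construction of $Y_{\alpha,\beta}$ as $\textnormal{Proj}$ of the graded ring $R_{\alpha,\beta}=\bigoplus_n\SI(Q,\alpha)_{\sigma_{n\beta}}$, rational singularities of the good quotient of the smooth affine space $\Rep(Q,\alpha)$ (the paper asserts this where you invoke Boutot explicitly), the saturation theorem for the emptiness criterion, and --- the heart of the matter --- Kempf descent checked on closed orbits by decomposing a polystable $V$ as $\bigoplus_i m_iV_i$ with each $V_i$ stable of weight $\sigma_{\beta}(\dim V_i)=0$ and showing that the stabilizer $\prod_i\textnormal{GL}(m_i)$ acts on the fiber through $\prod_i\det(g_i)^{\pm\sigma_{\beta}(\dim V_i)}=1$. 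The paper pins down the exponents via one-parameter subgroups inside the block embedding rather than by expanding the determinant directly, but the computation is the same; you also use implicitly that $(R_{\alpha,\beta})_0=\complex$, which holds because $Q$ has no oriented cycles.

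One justification you give is wrong as stated: the complement of $R^{SS}$ in $\Rep(Q,\alpha)$ need not have codimension $\geq 2$. The unstable locus is the common zero locus of all positive-degree semi-invariants and can perfectly well be a divisor; for the $A_2$ quiver $1\rightarrow 2$ with $\alpha=(1,1)$, $\beta=(0,1)$ one has $\langle\alpha,\beta\rangle_Q=0$, $\Rep(Q,\alpha)\cong\complex$, and the unstable locus is the origin, a hypersurface. So you cannot obtain $H^0(R^{SS},L_{\sigma_{\beta}}^{\otimes n})^{\textnormal{GL}(Q,\alpha)}=\SI(Q,\alpha)_{\sigma_{n\beta}}$ by a Hartogs-type extension across the unstable locus. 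The identification is nonetheless correct, and the clean way to get it (the route the paper takes) is to define $Y_{\alpha,\beta}$ and its polarization directly from the graded ring $\bigoplus_n H^0(\Rep(Q,\alpha),L_{\sigma_{\beta}}^{\otimes n})^{\textnormal{GL}(Q,\alpha)}$, computed on all of $\Rep(Q,\alpha)$ as in Lemma \ref{SemiInvariantsAreSections}, so that no extension over the unstable locus is ever required. With that repair, the remainder of your argument stands.
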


The next proposition paves the way for our geometric proof of Theorem \ref{QuiverKTT}. It follows from \ref{GITForQuivers} by a simple argument which appears in \cite[Theorem 2.5]{ShermanKTT}.

\begin{Proposition}\label{QuiverTranslation}
Theorem \ref{QuiverKTT} is equivalent to the following statement. If $\SI(Q,\alpha)_{\sigma_{\beta}}$ has dimension $2$, then $Y_{\alpha,\beta}$ has dimension $1$. 
\end{Proposition}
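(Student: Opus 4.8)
The plan is to pass through the GIT description of Proposition \ref{GITForQuivers}, where $Y_{\alpha,\beta}$ is the projective variety $\textnormal{Proj}\bigoplus_{n\geq 0}\SI(Q,\alpha)_{\sigma_{n\beta}}$ (using the canonical identification $H^0(Y_{\alpha,\beta},L_Y^{\otimes n})=\SI(Q,\alpha)_{\sigma_{n\beta}}$ together with the ampleness of $L_Y$). The stretched function $\widetilde{P}(n)=\dim\SI(Q,\alpha)_{\sigma_{n\beta}}$ is then the Hilbert function of the polarized variety $(Y_{\alpha,\beta},L_Y)$, and by the polynomiality result of Derksen–Weyman it agrees with the Hilbert polynomial for all $n\geq 1$. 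The key elementary fact is that the degree of this Hilbert polynomial equals $\dim Y_{\alpha,\beta}$, since $L_Y$ is ample on the (nonempty, integral, projective) variety $Y_{\alpha,\beta}$ — and that $\widetilde P$ is identically zero exactly when $Y_{\alpha,\beta}=\emptyset$, which is the saturation statement already recorded in Proposition \ref{GITForQuivers}.

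First I would set up the two directions. For the forward direction, assume Theorem \ref{QuiverKTT}: if $\widetilde P(1)=2$ then $\widetilde P(n)=n+1$ for all $n\geq 1$, so the Hilbert polynomial is $n+1$, which has degree $1$, forcing $\dim Y_{\alpha,\beta}=1$. For the converse, assume the displayed statement and suppose $\widetilde P(1)=2$. Since $\widetilde P(1)\neq 0$, saturation gives $Y_{\alpha,\beta}\neq\emptyset$, so by hypothesis $\dim Y_{\alpha,\beta}=1$; hence the Hilbert polynomial has degree exactly $1$, say $\widetilde P(n)=an+b$ for $n\geq 1$ with $a>0$ (the leading coefficient $a=\deg_{L_Y}Y_{\alpha,\beta}>0$). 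Using $\widetilde P(1)=a+b=2$ it remains to pin down $a$ and $b$ individually; this is where I expect to lean on Fulton's theorem for quivers (the case $\widetilde P(1)=1$, already proven by Derksen–Weyman) and on a monotonicity/semicontinuity input, to rule out $a=2,b=0$ and conclude $a=b=1$, i.e. $\widetilde P(n)=n+1$.

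The main obstacle is exactly that last step: degree and evaluation at $n=1$ do not by themselves determine a linear polynomial, so I must exclude the possibility $\widetilde P(n)=2n$. The cleanest route is to invoke that $\widetilde P(0)=1$ (the ring $\bigoplus_n\SI(Q,\alpha)_{\sigma_{n\beta}}$ has $\SI(Q,\alpha)_{\sigma_0}=\complex$ because $Q$ has no oriented cycles, so the only semi-invariants of weight $0$ are constants), and that for an integral projective variety the Hilbert polynomial of an ample line bundle takes the value $1$ at $0$ only in dimension... — no, that is false in general, so instead I would argue: a degree-one Hilbert polynomial of an ample line bundle on an integral projective curve $Y$ is $\chi(L_Y^{\otimes n})=n\deg L_Y+(1-g)$ for $n$ large, and one needs $h^1$ to vanish for $n\geq 1$; combining $\widetilde P(0)=1$ with $\widetilde P(1)=2$ gives $b=1$ hence $a=1$, provided the polynomial formula is already valid at $n=0$, which is part of the Derksen–Weyman polynomiality statement. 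Thus the real content I must be careful about is whether the polynomial identity $\widetilde P(n)=$ Hilbert polynomial holds down to $n=0$; if the Derksen–Weyman theorem only guarantees it for $n\geq 1$, I would instead supply the normality/rational-singularities input from Proposition \ref{GITForQuivers} to get $H^i(Y_{\alpha,\beta},L_Y^{\otimes n})=0$ for $i>0$, $n\geq 1$ (via Kodaira-type vanishing for rational singularities), reducing $\widetilde P$ on $n\geq 1$ to the Euler characteristic, and then determine the curve's arithmetic genus from $\widetilde P(1)=2$ together with $\deg L_Y\geq 1$.
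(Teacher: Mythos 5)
Your forward direction is fine and essentially matches what the paper does (by reference to \cite[Theorem 2.5]{ShermanKTT}): since $L_Y$ is ample on the integral projective variety $Y_{\alpha,\beta}$ and $h^0(Y_{\alpha,\beta},L_Y^{\otimes n})=\SI(Q,\alpha)_{\sigma_{n\beta}}$ grows linearly, $\dim Y_{\alpha,\beta}$ must be $1$. (The paper's version is even more elementary: two independent sections $x,y\in\SI(Q,\alpha)_{\sigma_\beta}$ are algebraically independent, and the $n+1$ monomials $x^iy^{n-i}$ already exhaust $\SI(Q,\alpha)_{\sigma_{n\beta}}$, so the section ring has Krull dimension $2$. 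Neither version needs Derksen--Weyman polynomiality.)

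The converse is where you diverge from the paper and where your proposal has a genuine soft spot, which you yourself flag: after establishing $\deg\widetilde P=1$ and $\widetilde P(1)=2$, you must exclude $\widetilde P(n)=2n$, and your fallback argument does not do this. Rational singularities of $Y_{\alpha,\beta}$ do not by themselves give $H^1(Y_{\alpha,\beta},L_Y^{\otimes n})=0$ for all $n\geq 1$; and even granting that vanishing, Riemann--Roch gives $\widetilde P(1)=\deg L_Y+1-g=2$, which is satisfied by $(g,\deg L_Y)=(1,2)$ just as well as by $(0,1)$ --- a smooth elliptic curve with a degree-$2$ polarization has $h^0(L^{\otimes n})=2n$ for every $n\geq 1$ and passes every test you impose, so the genus is genuinely not determined by your data. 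Your primary route ($\widetilde P(0)=1$ together with validity of the Derksen--Weyman polynomial at $n=0$) does close the gap, but it cannot be left as a hedge: it is the entire content of the step, and it makes the proposition depend on the precise strength of a deep theorem. The paper's route avoids this entirely: $Y_{\alpha,\beta}$ is a normal (hence smooth) projective curve dominated by the rational variety $\Rep(Q,\alpha)^{\sigma_{\beta}-SS}$, so L\"{u}roth gives $Y_{\alpha,\beta}\cong\Pone$; then $L_Y=\OO_{\Pone}(d)$ with $d+1=h^0(L_Y)=2$, whence $d=1$ and $h^0(L_Y^{\otimes n})=n+1$. You already have every input for this argument from Proposition \ref{GITForQuivers}, and it replaces the appeal to polynomiality by an elementary rationality observation; I recommend you either adopt it or state and cite the $n=0$ polynomiality claim explicitly and delete the flawed cohomological fallback.
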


To prove \ref{GITForQuivers}, we begin with some generalities for a reductive group $G$ acting on the left on an affine $\complex$-variety $V=\Spec A$. Let $\sigma:G\rightarrow\complex^*$ be a character. Define a linearization $L$ of the action of $G$ on $V$ by letting the underlying bundle of $L$ be $V\times\complex$ and defining the action on $L$ such that $g\cdot(v,z)=(gv,\sigma(g^{-1})z)$. Writing $L^{-1}$ (also a trivial bundle) as $\Spec A[x]$, one obtains from the induced action a rational representation of $G$ on $A[x]$ by $g\cdot (fx^n)=(\sigma(g^{-n}))(f\circ g^{-1})x^n$; here we regard $f\in A$ as an algebraic function on $V$. Also, one has an action on global sections $s:V\rightarrow L$ by $g\cdot s=g\circ s\circ g^{-1}$. This gives rise to a grade-preserving action on $R=\oplus_{n=0}^{\infty}H^0(V,L^{\otimes n})$.

\begin{Lemma}\label{RIsAOfx}
With the above actions, one has a $G$-equivariant, graded $A$-algebra isomorphism $A[x]\rightarrow R$ given by sending $x$ to the constant section $1$ of $L$ in $R_1$. 
\end{Lemma}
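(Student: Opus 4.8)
The plan is to write down the $A$-algebra map explicitly, check it is a well-defined graded map, verify $G$-equivariance, and then produce an inverse; the content is mostly bookkeeping, so I would aim to set up notation cleanly and let the identifications do the work. Since $L = V \times \complex$ is the trivial bundle, a global section $s \colon V \to L$ is the same as a regular function $f_s \in A$ (namely $s(v) = (v, f_s(v))$), and more generally a global section of $L^{\otimes n}$ is the same as a regular function on $V$ because $L^{\otimes n}$ is again trivial. Thus as a graded $A$-module $R = \oplus_n H^0(V, L^{\otimes n})$ is just a polynomial ring $A[x]$, where $x$ corresponds to the constant section $1 \in H^0(V, L) = R_1$, and $x^n$ corresponds to the constant section $1 \in H^0(V, L^{\otimes n}) = R_n$; this gives the graded $A$-algebra isomorphism $A[x] \to R$, $fx^n \mapsto f \cdot 1_{R_n}$, with no choices involved. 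So the only real point is equivariance.

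For equivariance I would trace through the two prescribed actions. On the left side, $g$ acts on $A[x]$ by $g \cdot (fx^n) = \sigma(g^{-n})\,(f \circ g^{-1})\,x^n$. On the right side, $g$ acts on a section $s$ of $L^{\otimes n}$ by $g \cdot s = g \circ s \circ g^{-1}$, where the outer $g$ is the action on the total space of $L^{\otimes n}$, which by definition is $(v, z) \mapsto (gv, \sigma(g^{-n}) z)$ on $L^{\otimes n}$ (the $n$-th tensor power of the linearization scales the fiber coordinate by $\sigma(g^{-1})^n = \sigma(g^{-n})$). Writing $s$ as the function $f_s \in A$, the section $g \cdot s$ sends $v$ to $g\bigl(s(g^{-1}v)\bigr) = g\bigl(g^{-1}v,\, f_s(g^{-1}v)\bigr) = \bigl(v,\, \sigma(g^{-n}) f_s(g^{-1}v)\bigr)$, so $g \cdot s$ corresponds to the function $\sigma(g^{-n})\,(f_s \circ g^{-1}) \in A$. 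Under the identification $f_s x^n \leftrightarrow s$ this is exactly $\sigma(g^{-n})(f_s \circ g^{-1}) x^n = g \cdot (f_s x^n)$, which is the equivariance of the map on the degree-$n$ piece. Summing over $n$ gives $G$-equivariance of $A[x] \to R$.

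Finally the map is clearly a homomorphism of graded $A$-algebras: it is $A$-linear by construction, sends the unit to the unit, and the product of the constant section $1$ of $L^{\otimes m}$ with the constant section $1$ of $L^{\otimes n}$ is the constant section $1$ of $L^{\otimes(m+n)}$, matching $x^m \cdot x^n = x^{m+n}$. It is bijective in each degree because $H^0(V, L^{\otimes n}) \cong A$ via the trivialization and that isomorphism is precisely multiplication by the constant section $1_{R_n}$. Hence it is an isomorphism of graded $A$-algebras, and we have shown it is $G$-equivariant.

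I do not anticipate a genuine obstacle here; the one place to be careful is the compatibility of the tensor-power linearization with the character (that $L^{\otimes n}$ scales fibers by $\sigma(g^{-1})^n$, not by $\sigma(g^{-1})$), and the consistent use of the "act on sections by conjugation" convention $g \cdot s = g \circ s \circ g^{-1}$, which is what makes the signs in the exponents of $\sigma$ line up with those in the definition of the $G$-action on $A[x]$.
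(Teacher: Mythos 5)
Your proposal is correct and follows essentially the same route as the paper: identify sections of the trivial bundle $L^{\otimes n}$ with functions in $A$, send $fx^n$ to the section $v\mapsto(v,f(v))$, and check that $g\cdot(fx^n)$ maps to $g\cdot s$ by computing $g\circ s\circ g^{-1}$ explicitly. The paper's proof is just the two-line version of your equivariance computation, leaving the graded-algebra and bijectivity checks implicit.
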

\begin{proof}
The polynomial $fx^n$ goes to the section $s:v\mapsto (v,f(v))$ in $R_n$. The polynomial $g\cdot (fx^n)$ goes to to the section $v\mapsto(v,\sigma(g^{-n})(f\circ g^{-1})(v))$, which is $g\cdot s$.
\end{proof}
\begin{Remark}\label{RIsFiniteType}
Since $G$ acts rationally on $A[x]$, by the theorem of Hilbert/Nagata, $R^G$ is a finitely generated $\complex$ algebra.
\end{Remark}

Now, let $Q$ be a quiver without oriented cycles, and fix dimension vectors $\alpha,\beta$ with $\langle\alpha,\beta\rangle_Q=0$, and suppose $\sigma_{\beta}$ is as in Definition \ref{SigmaBeta}. Define a $\textnormal{GL}(Q,\alpha)$-equivariant line bundle $L_{\sigma_\beta}$ on $\Rep(Q,\alpha)$ as above.

\begin{Lemma}\label{SemiInvariantsAreSections}
For any $n\in\mathbb{N}$, we have $H^0(\Rep(Q,\alpha),L^{\otimes n}_{\sigma_\beta})^{\textnormal{GL}(Q,\alpha)}=\SI(Q,\alpha)_{\sigma_{n\beta}}$. 
\end{Lemma}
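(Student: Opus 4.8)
The plan is to unwind both sides through the description of global sections of a trivial equivariant line bundle. By Lemma \ref{RIsAOfx} applied to $G=\textnormal{GL}(Q,\alpha)$, $V=\Rep(Q,\alpha)$, and the character $\sigma_\beta$, we have a $G$-equivariant graded algebra isomorphism $A[x]\cong R=\bigoplus_{n\geq 0}H^0(\Rep(Q,\alpha),L_{\sigma_\beta}^{\otimes n})$, where $A=H^0(\Rep(Q,\alpha),\OO)$ and the degree-$n$ piece $A[x]_n=A\cdot x^n$ carries the action $g\cdot(fx^n)=\sigma(g^{-n})(f\circ g^{-1})x^n$. Taking $G$-invariants is compatible with the grading, so it suffices to identify $(A\cdot x^n)^G$ with $\SI(Q,\alpha)_{\sigma_{n\beta}}$ inside $A$.

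First I would spell out what it means for $fx^n$ to be $G$-invariant: $\sigma(g^{-n})(f\circ g^{-1})=f$ for all $g$, i.e. $f\circ g^{-1}=\sigma(g)^n f$, equivalently $g\cdot f=\sigma(g)^n f$ where $g\cdot f:=f\circ g^{-1}$ is the usual action on functions. Next I would observe that $\sigma(g)^n=\sigma_{\beta}(g)^n=\sigma_{n\beta}(g)$ as characters of $\textnormal{GL}(Q,\alpha)$ — this is exactly the multiplicativity $\sigma_{n\beta}=n\sigma_\beta$ from Remark \ref{SigmaBetaRemark}, read at the level of characters rather than weights (raising the character to the $n$-th power corresponds to scaling the integer weight by $n$). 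Hence $(A\cdot x^n)^G=\{f\in A: g\cdot f=\sigma_{n\beta}(g)f\ \text{for all}\ g\in\textnormal{GL}(Q,\alpha)\}$, which is precisely the definition of $\SI(Q,\alpha)_{\sigma_{n\beta}}$ given in Section \ref{QuiverPrelims}.

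I would then just need to confirm that the isomorphism $A[x]\to R$ of Lemma \ref{RIsAOfx} carries the subspace $A\cdot x^n$ onto $H^0(\Rep(Q,\alpha),L_{\sigma_\beta}^{\otimes n})$ (it is grade-preserving by construction, sending $x\mapsto 1\in R_1$) and that it is $G$-equivariant, so that it restricts to a bijection on $G$-invariants in each degree. Combining the three identifications — $H^0(L_{\sigma_\beta}^{\otimes n})^G=(R_n)^G=(A\cdot x^n)^G=\SI(Q,\alpha)_{\sigma_{n\beta}}$ — gives the claim.

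There is no real obstacle here; the only subtlety worth stating carefully is the bookkeeping of inverses and sign conventions, namely that the equivariant structure $g\cdot(v,z)=(gv,\sigma_\beta(g^{-1})z)$ on $L_{\sigma_\beta}$ produces the weight $+\sigma_{n\beta}$ (and not $-\sigma_{n\beta}$) on degree-$n$ sections, so that one lands on the space $\SI(Q,\alpha)_{\sigma_{n\beta}}$ as defined rather than its dual weight space. Once the conventions from Lemma \ref{RIsAOfx} are imported verbatim, this is automatic, and the proof is a two-line consequence of that lemma together with Remark \ref{SigmaBetaRemark}.
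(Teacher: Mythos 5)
Your proposal is correct and follows essentially the same route as the paper: both arguments reduce to the observation that a section of the trivial bundle $L_{\sigma_\beta}^{\otimes n}$ is just a function $f$ on $\Rep(Q,\alpha)$, and that invariance under the twisted equivariant action unwinds to the condition $g\cdot f=\sigma_{n\beta}(g)f$, i.e.\ $\sigma_{n\beta}$ semi-invariance. The only cosmetic difference is that you route the identification through Lemma \ref{RIsAOfx} and the graded piece $A\cdot x^n$, whereas the paper performs the same one-line computation directly on sections; your sign/inverse bookkeeping matches the paper's conventions.
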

\begin{proof}
A section $f$ of $L^{\otimes n}_{\sigma_\beta}$ is simply a regular algebraic function on $\Rep(Q,\alpha)$. It is $\textnormal{GL}(Q,\alpha)$ invariant if and only if $f(gV)=(\sigma_{\beta}(g^{-1}))^{n}f(V)$ for all $V,g$. This happens if and only if $g^{-1}\cdot f=\sigma_{n\beta}(g^{-1})f$ for all $g$, i.e. if and only if $f$ is a $\sigma_{n\beta}$ semi-invariant. 
\end{proof}
Let $R_{\alpha,\beta}:=\oplus_{n=0}^{\infty}H^0(\Rep(Q,\alpha),L^n_{\sigma_{\beta}})^{\text{GL}(Q,\alpha)}=\oplus_{n=0}^{\infty}\SI(Q,\alpha)_{\sigma_{n\beta}}$ be the homogeneous coordinate ring of $Y_{\alpha,\beta}:=\text{Proj}(R_{\alpha,\beta})$. Note that
\begin{itemize}
\item $(R_{\alpha,\beta})_0=\complex$ since $Q$ has no oriented cycles \cite[Exercise 1.5.1.28]{SchmittGITAndDecoratedBundles}.
\item $Y_{\alpha,\beta}$ is a finite dimensional projective scheme over $\Spec((R_{\alpha,\beta})_0)=\Spec\complex$ by Remark \ref{RIsFiniteType}.
\item $Y_{\alpha,\beta}$ is a good quotient of $\Rep(Q,\alpha)^{SS}_{L_{\sigma_{\beta}}}$ by $\text{GL}(Q,\alpha)$ \cite{KingFDAlgebras}. Thus, $Y_{\alpha,\beta}$ is integral with rational singularities (in particular, is normal).
\item The notion of $L_{\sigma_{\beta}}$ GIT semistability agrees with the $\sigma_{\beta}$-semistability defined by inequalities; that is, $\Rep(Q,\alpha)^{SS}_{L_{\sigma_{\beta}}}=\Rep(Q,\alpha)^{\sigma_{\beta}-SS}$ \cite[Proposition 3.1]{KingFDAlgebras}.
\end{itemize} 
Following closely the proof of Pauly \cite[Theorem 3.3]{PaulyEspaces} of the analogous fact for moduli of parabolic bundles, we will now show that the line bundle $L_{\sigma_{\beta}}|_{\Rep(Q,\alpha)^{\sigma_{\beta}-SS}}$ descends to an ample line bundle on $Y_{\alpha,\beta}$. To do this, we recall the descent lemma below due to Kempf \cite[Theorem 2.3]{DrezetNarasimhan}.

\begin{Lemma}\label{KempfDescentLemma}
Let $G$ be a reductive linear algebraic group acting on a $\complex$-variety $X$. Let $f:X\rightarrow Y$ be a good quotient of $X$ by $G$ and $E$ a $G$-equivariant vector bundle on $X$. Then $E$ descends to $Y$ if and only if for each closed point $x\in X$ whose orbit is closed, one has that the stabilizer of $x$ in $G$ acts trivially on the fiber $E|_x$.
\end{Lemma}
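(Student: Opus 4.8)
The plan is to prove both implications; the forward one is essentially formal, while the converse requires a local analysis on $Y$ via Luna's étale slice theorem. For the ``only if'' direction, suppose $E \cong f^*E'$ with its canonical equivariant structure. For any point $x$ and any $g$ in the stabilizer $G_x$, the equivariant isomorphism $E|_x \xrightarrow{\sim} E|_{gx}$ is, under this identification, the identity of the common fibre $E'|_{f(x)} = E'|_{f(gx)}$ (using $f(gx)=f(x)$), so $G_x$ acts trivially on $E|_x$ --- indeed at every point of $X$, not only those on closed orbits. This is a one-line verification.

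For the converse I would reduce to a local statement. Descent of a vector bundle is effective for the étale topology, so it suffices to produce, for each $y \in Y$, an étale neighbourhood of $y$ over which $E$ descends; the descent datum carried by $E$ itself then glues the local pieces into a bundle $E'$ on $Y$ with $f^*E' \cong E$. Since $f$ is affine I may also assume $X$ and $Y$ affine. Fix $y$, let $Gx$ be the unique closed orbit in $f^{-1}(y)$, and set $H := G_x$; the orbit being closed, hence affine, $H$ is reductive. I would then invoke Luna's étale slice theorem to obtain an affine $H$-stable slice $S \ni x$, a $G$-equivariant étale morphism $G \times_H S \to X$ onto a saturated open set, and an induced étale morphism $S /\!\!/ H \to Y$ whose image contains $y$, all compatibly with the quotient maps. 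Under the standard equivalence between $G$-equivariant bundles on $G \times_H S$ and $H$-equivariant bundles on $S$, the bundle $E$ pulls back to an $H$-equivariant bundle $E_S$ on $S$, and the hypothesis says that $H = G_x$ acts trivially on the fibre $E_S|_x$. The problem is thereby reduced to descending $E_S$ to $S /\!\!/ H$ near the image of $x$.

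This last step is the heart of the matter and, I expect, the only genuine obstacle. The reason for passing to the slice is that $x$ is now an $H$-fixed point, so averaging over $H$ is compatible with evaluation there --- something that fails for the naive Reynolds operator applied to sections of $E$ over $X$. Concretely: pick sections $s_1,\dots,s_r$ of $E_S$ on an affine open around $x$ whose values $s_i(x)$ form a basis of $E_S|_x$, where $r = \operatorname{rk} E$; apply the Reynolds operator of the reductive group $H$ to obtain $H$-invariant sections $\tilde s_i$; since $x$ is $H$-fixed and $H$ acts trivially on $E_S|_x$, one has $\tilde s_i(x) = s_i(x)$, so the $\tilde s_i(x)$ still form a basis. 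By Nakayama the $\tilde s_i$ generate $E_S$ on a neighbourhood of $x$, which I shrink to an $H$-saturated open $U$; being $r$ invariant generating sections of a rank-$r$ bundle, they define an $H$-equivariant isomorphism $\mathcal{O}_U^{\,r} \xrightarrow{\sim} E_S|_U$ with $\mathcal{O}_U^{\,r}$ carrying the trivial $H$-action, and this descends at once to the trivial bundle on the open set $U /\!\!/ H$ of $S /\!\!/ H$. As $y$ ranges over $Y$ --- every point of $Y$ being the image of some closed orbit --- the resulting maps to $Y$ form an étale cover over which $E$ descends, which completes the argument.
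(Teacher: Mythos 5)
The paper does not prove this lemma at all: it is quoted as a black box from Dr\'ezet--Narasimhan (Th\'eor\`eme 2.3), where it is attributed to Kempf. Your argument reconstructs, in substance, the standard proof of that cited result, and I find it correct as a sketch. The forward direction is fine, and for the converse the sequence of moves --- reduce to $X$, $Y$ affine, pass to a Luna slice $S$ at a point $x$ of the closed orbit, translate $E$ into an $H=G_x$-equivariant bundle $E_S$ on $S$, and then average a framing of $E_S$ at $x$ by the Reynolds operator of $H$, using that $x$ is $H$-fixed and that $H$ acts trivially on $E_S|_x$ so that evaluation at $x$ commutes with averaging --- is exactly where the hypothesis enters, and you use it correctly. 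A few inputs are used tacitly and deserve a word: (i) reductivity of $H$ is Matsushima's criterion (the orbit is closed, hence affine, hence $H$ is reductive); (ii) you need the general form of Luna's \'etale slice theorem, valid for a possibly singular affine $G$-variety (the slice is produced by equivariantly embedding $X$ in a $G$-module), not only the smooth version; (iii) the final gluing of the local descents requires that $f^*$ be fully faithful on vector bundles, which follows from $(f_*\OO_X)^G=\OO_Y$ together with the projection formula --- this is what makes the locally descended bundles agree canonically on overlaps of the \'etale cover. Finally, when you shrink to an $H$-saturated open $U$ on which the averaged sections generate, observe that the non-generating locus is closed, $H$-invariant, and misses $x$, hence misses all of $\pi^{-1}(\pi(x))$ (every $H$-orbit in that fiber contains $x$ in its closure), so such a saturated $U$ containing $x$ indeed exists. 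With these glosses your proof is complete and is the same argument as in the cited source.
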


Let $V$ be a $\sigma_{\beta}$-semistable representation whose orbit is closed in $\Rep(Q,\alpha)^{\sigma_{\beta}-SS}$. The stabilizer $S_V$ of $V$ in $\text{GL}(Q,\alpha)$ is the group of invertible elements of $\Hom_Q(V,V)$. If $V$ is stable, then we claim $S_V=\complex^*\cdot\text{Id}$, by the following simple argument. If $g:V\rightarrow V$ is an automorphism of $Q$ representations, then choosing some $x$ for which $V(x)$ is nonzero, the isomorphism $g(x):V(x)\rightarrow V(x)$ has a nonzero eigenvalue $\lambda$. Thus, $g-\lambda\cdot\text{Id}$ has a nontrivial kernel. By stability of $V$ and general nonsense for abelian categories with stability structure \cite{RudakovStability}, it follows that $g-\lambda\cdot\text{Id}=0$, whence the claim. The automorphism $\lambda\cdot\text{Id}$ acts on the fiber in $L_{\sigma_{\beta}}$ over $V$ by $\lambda$ to the power of $-\sum_{x\in Q_0}\alpha(x)\sigma_{\beta}(x)=\langle \alpha,\beta \rangle_Q=0$, as desired.

Now consider the general case where $V$ may not be strictly stable. By \cite[Propostion 3.2]{KingFDAlgebras}, we can assume $V$ is a direct sum of $\sigma_{\beta}$-stable representations $V=m_1 V_1\oplus...\oplus m_t V_t$ which satisfy $\sigma_{\beta}(\dim V_i)=0$. Here $V_i$ is not isomorphic to $V_j$ if $i\neq j$. The stabilizer $S_V$ of $V$ is the group of invertible elements of $\Hom_Q(V,V)$, which, arguing as above via \cite{RudakovStability}, is isomorphic to $\text{GL}(m_1)\times...\times\text{GL}(m_t)$. Here we identify $\text{GL}(m_1)\times...\times\text{GL}(m_t)$ with the subgroup of $\text{GL}(Q,\alpha)$ consisting of $g$ such that $g(x)$, taking an appropriate basis for the direct sum, is represented by a block diagonal matrix $\text{diag}(A_1(x),...,A_t(x))$, where $A_i(x)$ is a $m_i\cdot\dim(V_i(x))\times m_i\cdot\dim(V_i(x))$ block matrix, with $m_i^2$-many scalar matrix blocks of size $\dim(V_i(x))\times\dim (V_i(x))$. The scalars that appear in these blocks do not depend on $x\in Q_0$.

Since a $1$-dimensional representation of the general linear group must be given by a power of the determinant, the action of $(g_1,...,g_t)\in S_V$ on the fiber of $L_{\sigma_{\beta}}$ over $V$ is multiplication by $\prod_{i=1}^{t}\det(g_i)^{a_i}$ for some integers $a_i$. For $i=1,..,t$, define $1$-parameter subgroups $h_i$ of $S_V$ by  $h_{i}(\lambda)=(\text{Id},...,\text{Id},\lambda\cdot\text{Id},\text{Id},...,\text{Id})$, where the $\lambda$ appears in the $i$th factor. On the one hand, $h_i$ acts on $L_{\sigma_{\beta}}|_{V}$ by $\lambda^{m_ia_i}$. On the other hand, regarding $S_V$ as a subgroup of $\text{GL}(Q,\alpha)$ as above, we see that $h_i$ acts on $L_{\sigma_{\beta}}|_{V}$ by $\lambda^{y_i}$ where
\begin{equation*}
y_i=-\sum_{x\in Q_0} m_i\cdot\dim(V_i(x))\sigma_{\beta}(x)=-m_i\cdot\sigma_{\beta}(\dim V_i)=0.
\end{equation*}
Therefore, $a_i=0$ for all $i=1,...,t$ and $S_V$ acts trivially on $L_{\sigma_{\beta}}|_V$, as desired.

\end{section}

\begin{section}{Useful Inductive Structure}\label{QuiverH1Prop}
The following proposition allows us to complete the proof of Theorem \ref{QuiverKTT} by an induction argument. It may be of independent interest outside of this proof. For example, it can be used to simplify the existing proof of the quiver-generalized Fulton conjecture \cite{DerksenWeymanCombinatorics}, although we do not do this here. 
\begin{Proposition}\label{QuiverH1PropStatement}
Fix $V\in\Rep(Q,\alpha)$. Let $U_{V}$ be a dense open subset of $\Rep(Q,\beta)$ with the following properties:
\begin{itemize}
\item[i.  ] $\dim\Hom_Q(V,W)$ does not depend on $W\in U_V$.
\item[ii. ] There is a dimension vector $\gamma$ such that for every $W\in U_V$, a dense open subset of $\Hom_Q(V,W)$ consists of morphisms $\phi$ of rank $\gamma$.
\end{itemize}
Now, fix some $W$ in $U_V$. If $\phi\in\Hom_Q(V,W)$ has rank $\gamma$ and $\ker\phi=S\in\Rep(Q,\alpha-\gamma)$, then the canonical surjection $\Ext_Q(V,W)\twoheadrightarrow\Ext_Q(S,W)$ is an isomorphism. 
\end{Proposition}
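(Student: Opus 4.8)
The plan is to exploit the short exact sequence $0 \to S \to V \xrightarrow{\phi} I \to 0$ in $\Rep(Q)$, where $I = \operatorname{im}\phi$ has dimension vector $\gamma$, together with a dimension count powered by properties i.\ and ii.\ of $U_V$. First I would record the long exact sequence obtained by applying $\Hom_Q(-,W)$ to this sequence. Since $\Rep(Q)$ has homological dimension $1$, it reads
\begin{equation*}
0 \to \Hom_Q(I,W) \to \Hom_Q(V,W) \to \Hom_Q(S,W) \to \Ext_Q(I,W) \to \Ext_Q(V,W) \to \Ext_Q(S,W) \to 0.
\end{equation*}
So the canonical surjection $\Ext_Q(V,W) \twoheadrightarrow \Ext_Q(S,W)$ is an isomorphism if and only if the map $\Ext_Q(I,W) \to \Ext_Q(V,W)$ is zero, equivalently (by exactness) if and only if $\Hom_Q(V,W) \to \Hom_Q(S,W)$ is surjective, equivalently if and only if
\begin{equation*}
\dim\Hom_Q(V,W) = \dim\Hom_Q(I,W) + \dim\Hom_Q(S,W).
\end{equation*}
Thus the whole statement reduces to establishing this one numerical identity for the specific $W$ and $\phi$ in question.

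The key point is that $\dim\Hom_Q(I,W) + \dim\Hom_Q(S,W)$ can only be an upper bound issue: from the left end of the long exact sequence we always have $\dim\Hom_Q(V,W) \le \dim\Hom_Q(I,W) + \dim\Hom_Q(S,W)$, with equality iff the connecting map vanishes. To get the reverse inequality I would use ii.\ and a genericity/semicontinuity argument. Consider the incidence-type variety over $U_V$ whose fiber over $W'$ is $\{\phi' \in \Hom_Q(V,W') : \operatorname{rank}\phi' = \gamma\}$; by ii.\ this is dense in $\Hom_Q(V,W')$ for all $W' \in U_V$, and by i.\ the total space has dimension $\dim\Rep(Q,\beta) + \dim\Hom_Q(V,W)$ (constant fiber dimension). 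Now map this variety to the space of pairs $(S', I')$ — more precisely, factor $\phi'$ as a surjection $V \to I'$ followed by an injection $I' \to W'$ — and count dimensions on the target side: the data is a point of $\Rep(Q,\beta)$ (for $W'$), a surjection $V \twoheadrightarrow I'$ up to the relevant automorphisms, and an injection $I' \hookrightarrow W'$, i.e.\ an element of (an open subset of) $\Hom_Q(I',W')$. Matching the dimension of the source with that of the image, and using that a generic such $\phi'$ realizes a given $S' = \ker\phi' \cong S$, produces exactly the inequality $\dim\Hom_Q(V,W) \ge \dim\Hom_Q(I,W) + \dim\Hom_Q(S,W)$ — here one also uses that $S$ and $I$ are determined (as objects) generically, and that $\dim\Hom_Q(S,W)$, $\dim\Hom_Q(I,W)$ are themselves upper-semicontinuous so that the generic value is the minimal one, which is what the inequality from the exact sequence bounds.

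The main obstacle I anticipate is the bookkeeping in this dimension count: one has to be careful about which automorphism groups act on the space of factorizations $V \twoheadrightarrow I' \hookrightarrow W'$, to ensure the map from the incidence variety to the $(I',\Hom_Q(I',W'))$-data has the fiber dimension one expects (fibers should be torsors under $\operatorname{Aut}_Q(I')$, or a suitable subgroup), and to confirm that $\dim\Hom_Q(I,W)$ really is computed by generic injections $I \hookrightarrow W$ rather than by some smaller stratum. A cleaner alternative, which I would try first, is to argue purely homologically: show directly that the connecting homomorphism $\Hom_Q(S,W) \to \Ext_Q(I,W)$ vanishes by identifying it with the map sending $\psi \colon S \to W$ to the class of the pushout extension $0 \to W \to E \to I \to 0$, and then using that $\phi$ being \emph{generic of rank $\gamma$} forces $\Hom_Q(V,W) \to \Hom_Q(S,W)$ to hit every $\psi$ — because otherwise deforming $W$ within $U_V$ (keeping $\dim\Hom_Q(V,W)$ constant by i.) would let us deform $\phi$ to increase the rank of the kernel map or change $\gamma$, contradicting ii. Either route ultimately rests on the same semicontinuity input; I expect the explicit incidence-variety count to be the version that actually goes through, so that is where I would concentrate the technical effort.
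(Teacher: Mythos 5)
Your homological reduction contains a direction error that undermines the whole plan. Write $I=\textnormal{Im}\,\phi$ and let $\delta\colon\Hom_Q(S,W)\rightarrow\Ext_Q(I,W)$ be the connecting map in your long exact sequence. Injectivity of $\Ext_Q(V,W)\rightarrow\Ext_Q(S,W)$ is equivalent to the map $\Ext_Q(I,W)\rightarrow\Ext_Q(V,W)$ being zero, i.e.\ to its kernel being all of $\Ext_Q(I,W)$, i.e.\ to $\delta$ being \emph{surjective}. By contrast, surjectivity of $\Hom_Q(V,W)\rightarrow\Hom_Q(S,W)$ --- equivalently your target identity $\dim\Hom_Q(V,W)=\dim\Hom_Q(I,W)+\dim\Hom_Q(S,W)$ --- is equivalent to $\delta$ being \emph{zero}. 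These two conditions coincide only when $\Ext_Q(I,W)=0$, which is not among the hypotheses and is not established. The correct numerical target, obtained from (\ref{RingelFormAlt}) together with the automatic surjectivity of $\Ext_Q(V,W)\twoheadrightarrow\Ext_Q(S,W)$, is the inequality $\dim\Hom_Q(V,W)\leq\dim\Hom_Q(S,W)+\langle\gamma,\beta\rangle_Q$, which is stronger than your ``always true'' bound by exactly $\dim\Ext_Q(I,W)$. So even if your incidence-variety count delivered the reverse inequality you are after, you would only have shown $\delta=0$, from which the proposition does not follow.

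The second gap is in the count itself. Your incidence variety is the paper's $\mathbf{H}$, and you compute its dimension $\dim\Rep(Q,\beta)+\dim\Hom_Q(V,W)$ the same way; but what is needed is an \emph{upper} bound on $\dim\Hom_Q(V,W)$, not a lower one, and the fibration that delivers it records the kernel rather than the image. One maps $\mathbf{H}$ to the quiver Grassmannian $\Grass(\alpha-\gamma,V)$ by $(W',\phi')\mapsto\ker\phi'$, shows the fibers contribute at most $\dim\Rep(Q,\beta)+\langle\gamma,\beta\rangle_Q$, and then --- this is the step your sketch has no counterpart for --- bounds the local dimension of $\Grass(\alpha-\gamma,V)$ at $S$ by its tangent space $\Hom_Q(S,V/S)$, which $\phi$ embeds into $\Hom_Q(S,W)$. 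That chain yields precisely $\dim\Hom_Q(V,W)\leq\dim\Hom_Q(S,W)+\langle\gamma,\beta\rangle_Q$. Your proposed factorization through the data $(I',\Hom_Q(I',W'))$ discards the kernel and has no mechanism for producing $\dim\Hom_Q(S,W)$ on the right-hand side; and your fallback argument (``deforming $W$ would let us change $\gamma$'') is a heuristic, not a proof. To repair the proposal, redo the homological bookkeeping to identify the inequality $\dim\Hom_Q(V,W)\leq\dim\Hom_Q(S,W)+\langle\gamma,\beta\rangle_Q$ as the goal, and run the dimension count over $\Grass(\alpha-\gamma,V)$.
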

\begin{proof}
Let
\begin{equation*}
\mathbf{H}=\{(W',\phi')\in U_{V}\times\Hom_Q(V,W'):\phi'\textnormal{ has rank }\gamma\}
\end{equation*}
Note that $\mathbf{H}$ is an open subset of the total space of a vector bundle over $U_V$, hence irreducible of dimension
\begin{equation}\label{QuiverH1Prop1}
\dim\mathbf{H}=\dim(\Rep(Q,\beta))+\dim\Hom_Q(V,W).
\end{equation}
Denote by $\Grass(\alpha-\gamma,V)$ the space of $(\alpha-\gamma)$-dimensional subrepresentations of $V$. Then we have a map $\mathbf{H}\rightarrow\Grass(\alpha-\gamma,V)$ which sends $(W',\phi')$ to $\ker\phi'$. The fiber over a point $S'$ is an open subset of the space of points $(W',\phi')$, where $\phi'\in\Hom_Q(V/S',W')$. 

Define an intermediate space $\mathbf{H}'$ with $\mathbf{H}\rightarrow\mathbf{H}'\rightarrow\Grass(\alpha-\gamma,V)$ such that the fiber in $\mathbf{H'}$ over $S'\in\Grass(\alpha-\gamma,V)$ is given by the open subset of $\prod_{x\in Q_0}\Hom((V/S')(x),\mathbb{C}^{\beta(x)})$ consisting of $\phi'$ such that $\phi'(x)$ is injective for all $x$. Clearly, $\mathbf{H'}\rightarrow\Grass(\alpha-\gamma,V)$ is smooth ($\mathbf{H'}$ is an open subset of a vector bundle over $\Grass(\alpha-\gamma,V)$) and
\begin{equation}\label{QuiverH1Prop2}
\textnormal{reldim}(\mathbf{H'}\rightarrow\Grass(\alpha-\gamma,V))=\sum_{x\in Q_0}\gamma(x)\beta(x).
\end{equation}
Next observe that the fiber in $\mathbf{H}$ over $(S',\phi')\in\mathbf{H'}$ is given by the space of all $W'\in U_V$ such that $\phi'$ is an injective morphism of representations $V/S'\rightarrow W'$. The condition imposed on each arrow $a$ in $W'$ is that $\phi'(ha)\circ(V/S')(a)=W'(a)\circ\phi'(ta)$. Regarding $W'(a)$ as a $\beta(ta)\times\beta(ha)$ matrix with respect to appropriately chosen bases, this equation determines $\gamma(ta)\beta(ha)$ coordinates of $W'(a)$. Thus, we obtain:
\begin{equation}\label{QuiverH1Prop3}
\textnormal{reldim}(\mathbf{H}\rightarrow\mathbf{H}')=\dim(\Rep(Q,\beta))-\sum_{a\in Q_1}\gamma(ta)\beta(ha).
\end{equation} 
Therefore combining (\ref{QuiverH1Prop1}), (\ref{QuiverH1Prop2}), and (\ref{QuiverH1Prop3}) we obtain:
\begin{equation}\label{QuiverH1Prop4}
\dim\Hom_Q(V,W)\leq\dim(\Grass(\alpha-\gamma,V)\textnormal{ at }S)+\langle\gamma,\beta\rangle_{Q},
\end{equation}
where the first summand on the right hand side of (\ref{QuiverH1Prop4}) is the dimension of the largest irreducible component of $\Grass(\alpha-\gamma,V)$ passing through the point $S$. This is at most the dimension of the scheme-theoretic tangent space to $\Grass(\alpha-\gamma,V)$ at $S$, which is $\Hom_Q(S,V/S)$ \cite[Lemma 3.2]{Schofield}. From (\ref{QuiverH1Prop4}), it now follows that
\begin{equation}\label{QuiverH1Prop5}
\dim\Hom_Q(V,W)\leq\dim\Hom_Q(S,V/S)+\langle\gamma,\beta\rangle_{Q}.
\end{equation}
The given map $\phi:V\rightarrow W$ with kernel $S$ induces an injection $\Hom_Q(S,V/S)\hookrightarrow\Hom_Q(S,W)$. It follows that
\begin{equation}\label{QuiverH1Prop6}
\dim\Hom_Q(V,W)\leq\dim\Hom_Q(S,W)+\langle\gamma,\beta\rangle_{Q}.
\end{equation}
Since $\langle\gamma,\beta\rangle=\langle\alpha,\beta\rangle-\langle\alpha-\gamma,\beta\rangle$, the inequality (\ref{QuiverH1Prop6}) can be rewritten as $\dim\Ext_Q(V,W)\leq\dim\Ext_Q(S,W)$. The proof is complete.
\end{proof}
\end{section}

\begin{section}{Outline of the Proof of Theorem \ref{QuiverKTT} by Way of Proposition \ref{QuiverTranslation}}\label{QuiverOutline}
Let $Q$ be a quiver without oriented cycles, $\alpha$, $\beta$ dimension vectors with $\langle \alpha,\beta\rangle_Q=0$. Assume $\dim\SI(Q,\alpha)_{\sigma_{\beta}}=2$. By Proposition \ref{QuiverTranslation}, it suffices for the proof of Theorem \ref{QuiverKTT} to show that $\dim Y_{\alpha,\beta}=1$. This will be done by contradiction in Section \ref{QuiverProof}. If $\dim Y_{\alpha,\beta}\geq 2$, it forces $L_{Y}$ to have a base locus. Take an irreducible component of the inverse image of the base locus in $\Rep(Q,\alpha)^{\sigma_{\beta}-SS}$ and let $Z$ be its closure in $\Rep(Q,\alpha)$. Now for a general point of $(V,W)$ of $Z\times\Rep(Q,\beta)$, we have $\Hom_{Q}(V,W)\neq 0$ (that is, the semi-invariant $\det d^{\square}_W$ vanishes at $V$). This statement is to be contradicted.

Indeed, the assumption $\langle\alpha,\beta\rangle_Q=0$ ensures that $\dim\Hom_Q(V,W)=\dim\Ext_Q(V,W)$, so it suffices for the contradiction to show that $\Ext_Q(V,W)=0$. By Proposition \ref{QuiverH1PropStatement}, this is equivalent to $\Ext_Q(S,W)=0$, where $S$ is the kernel of a general morphism $V\rightarrow W$. The tricky part is to show that $(S,W)$ is generic enough in a closed subset of $\Rep(Q,\dim S)\times\Rep(Q,\beta)$ to apply \ref{QuiverH1PropStatement} again. For this, we need a better understanding of $Z$. We show that $Z$ is actually the image in $\Rep(Q,\alpha)$ of a natural map from a certain irreducible scheme $\mathbf{H}_{*}$, constructed in Section \ref{ConstructionOfDominatingScheme}. The simple description (\ref{DefinitionOfHAlphaBetaDeltaEpsilon}) of $\mathbf{H}_{*}$ allows us to show that indeed $(S,W)$ is generic enough for continued application of \ref{QuiverH1PropStatement}. After applying \ref{QuiverH1PropStatement} enough times, using the semistability of $V$, one finds a subrepresentation $S'$ of $S$ (hence of $V$) such that 
\begin{equation*}
0=\Ext_Q(S',W)\cong\Ext_Q(S,W)\cong\Ext_Q(V,W).
\end{equation*}
This gives our contradiction.

Before proceeding to the detailed proof, we isolate a basic principle from linear algebra which proves very useful in the work to follow. In fact, we've already used it once to get equation (\ref{QuiverH1Prop3}).

\begin{BasicPrinciple*}
Let $V_1$ and $V_2$ be finite dimensional vector spaces. Given two subspaces $i_1:S_1\hookrightarrow V_1$ and $i_2:S_2\hookrightarrow V_2$ and a morphism $\phi:S_1\rightarrow S_2$, the space of linear maps $\psi:V_1\rightarrow V_2$ such that $i_2\circ\phi =\psi\circ i_1$ is a closed nonempty subvariety of $\Hom(V_1,V_2)$ isomorphic to $\mathbb{A}^M$, where $M=\dim V_1\dim V_2-\dim S_1\dim V_2$.  
\end{BasicPrinciple*}

\end{section}

\begin{section}{Construction of $\mathbf{H}_{*}$}\label{ConstructionOfDominatingScheme}
For dimension vectors $\alpha$ and $\delta$, we will say $\delta\leq\alpha$ if for all $x\in Q_0$, $\delta(x)\leq\alpha(x)$. Choose dimension vectors $\alpha$, $\delta$, $\epsilon$ with $\epsilon\leq\delta\leq\alpha$. We will first construct a smooth, irreducible scheme 
\begin{equation}\label{DefinitionOfUAlphaDeltaEpsilon}
\mathbf{U}_{\alpha,\delta,\epsilon}=\{(V,S,S',T):V\in\Rep(Q,\alpha),S,S'\in\Grass(\delta,V),T=S\cap S'\in\Grass(\epsilon,V)\}.
\end{equation}
To begin the construction, recall from \cite[Appendix A]{ShermanKTT} the space $A^r_{f,f,g}$ of triples of subspaces $S$, $S'$, $T=S\cap S'$ of $\complex^r$ with dimensions $f$, $f$, and $g$, respectively. It is shown there that this space is smooth and irreducible. Define
\begin{equation*}
A_1:=\prod_{x\in Q_0}A^{\alpha(x)}_{\delta(x),\delta(x),\epsilon(x)}.
\end{equation*}
We will denote points in $A_1$ by $(S,S',T)$, where $S=(S(x))_{x\in Q_0}$, a collection of $\delta(x)$ dimensional subspaces of $\complex^{\alpha(x)}$ and similarly for $S'$ and $T$. 

For each $x$ in $Q_0$, let $\TT(x)$ be the appropriate rank $\epsilon(x)$ universal subbundle of $\OO_{A_1}\otimes\complex^{\alpha(x)}$. Letting $a_1$,...,$a_{|Q_1|}$ denote the arrows in $Q$, form the total space $A^1_1$ of the bundle $\underline{Hom}(\TT(ta_1),\TT(ha_1))$ over $A_1$. Over $A^1_1$, form the total space $A^2_1$ of the bundle $\underline{Hom}(\TT(ta_2)|_{A^1_1},\TT(ha_2)|_{A^1_1})$. Continue in this fashion until all the arrows are expended. Call the resulting space $A_2$, which is evidently irreducible and smooth over $A_1$. It can be described as follows:
\begin{equation*}
A_2=\{(S,S',T,\{\varphi(a)\}):(S,S',T)\in A_1\textnormal{ and }\{\varphi(a)\}\in\prod_{a\in Q_1}\Hom(T(ta),T(ha))\}.
\end{equation*} 
Now we will attach morphisms to the arrows of $S$ so that $T$ with the arrows $\{\varphi(a)\}$ gives a subrepresentation of $S$ with the attached morphisms. 

To do this, the idea is to apply the Basic Principle of Section \ref{QuiverOutline} at each point of $A_2$, once for each arrow in $Q_1$. More formally, let $\SSS(x)$ be the appropriate rank $\delta(x)$ universal subbundle of $\OO_{A_2}\otimes\complex^{\alpha(x)}$. For each $a\in Q_1$, let $\Phi(a)\in\Hom_{\OO_{A_2}}(\TT(ta),\TT(ha))$ be the universal morphism. The inclusion of bundles $\TT(x)\rightarrow\SSS(x)$ allows us to view $\Phi(a)$ as a section of the total space of $\underline{Hom}(\TT(ta),\SSS(ha))$. Let $A^1_2$ be the inverse image of $\textnormal{Im}\Phi(a_1)$ under the smooth, surjective restriction map of total spaces $\underline{Hom}(\SSS(ta_1),\SSS(ha_1))\rightarrow\underline{Hom}(\TT(ta_1),\SSS(ha_1))$ over $A_2$. Thus, $A^1_2$ is a smooth and surjective over $A_2$ and closed in $\Hom(\SSS(ta_1),\SSS(ha_1))$. Moreover, since the restriction map is smooth with irreducible fibers (each isomorphic to an $\mathbb{A}^M$ as in the Basic Principle), we have that $A^1_2$ is irreducible. Similarly, build $A^2_2$ over $A^1_2$, etc. until all arrows are expended. Repeat the procedure for $S'$ to finally obtain 
\begin{multline*}
A_3=\{(S,S',T,\{\varphi(a)\},\{\psi(a)\},\{\psi'(a)\}):(S,S',T,\{\varphi(a)\})\in A_2,\\ \{\psi(a)\}\in\prod_{a\in Q_1}\Hom(S(ta),S(ha)),\{\psi'(a)\}\in\prod_{a\in Q_1}\Hom(S'(ta),S'(ha)),\\ \textnormal{ and } \psi(a)|_{T(ta)}=\psi'(a)|_{T(ta)}=\varphi(a)\textnormal{ for all }a\in Q_1\}.
\end{multline*}
It is irreducible, surjective, and smooth over $A_2$. 

Finally, we construct $\mathbf{U}_{\alpha,\delta,\epsilon}$ as an irreducible, surjective, and smooth scheme over $A_3$ by a procedure similar to the construction of $A_3$ itself. The idea is to create a scheme $A^1_3$ over $A_3$ whose fiber over a point $(S,S',T,\{\varphi(a)\},\{\psi(a)\},\{\psi'(a)\})$ is the inverse image of $(\psi(a_1),\psi'(a_1))$ under the restriction $\Hom(\complex^{\alpha(ta_1)},\complex^{\alpha(ha_1)})$ to $\Hom(S(ta_1),\complex^{\alpha(ha_1)})\oplus\Hom(S'(ta_1),\complex^{\alpha(ha_1)})$. Because $\psi(a_1),\psi'(a_1)$ restrict to the same morphism on $T(ta_1)$, this fiber is irreducible of dimension independent of the point of $A_3$. Hence $A^1_3$ is irreducible, surjective, and smooth over $A_3$. As above, build an appropriate scheme $A^2_3$ over $A^1_3$, and so on, until the desired $\mathbf{U}_{\alpha,\delta,\epsilon}$ is reached.

Now define
\begin{multline}\label{DefinitionOfHAlphaBetaDeltaEpsilon}
\mathbf{H}_{\alpha,\beta,\delta,\epsilon}=\{(V,W,W',\phi,\phi'):V\in\Rep(Q,\alpha),W,W'\in\Rep(Q,\beta),\\ \phi\in\Hom_Q(V,W),\phi'\in\Hom_Q(V,W'),\\\ker\phi,\ker\phi'\in\Grass(\delta,V),(\ker\phi)\cap(\ker\phi')\in\Grass(\epsilon,V)\}
\end{multline}
This can be constructed as an irreducible, smooth scheme over $\mathbf{U}_{\alpha,\delta,\epsilon}$ as follows. Letting $\VV(x)$, $\SSS(x)$, and $\SSS'(x)$ denote the appropriate universal bundles on $\mathbf{U}_{*}$ for $x\in Q_0$, form the total space of the bundle
\begin{equation*}
\prod_{x\in Q_0}(\underline{Hom}((\VV/\SSS)(x),\complex^{\beta(x)}\otimes\OO)\times\underline{Hom}((\VV/\SSS')(x),\complex^{\beta(x)}\otimes\OO)).
\end{equation*}
A point of this total space over $(V,S,S',T)\in\mathbf{U}_{*}$ is given by finite collections of linear maps $\{\phi(x):V/S(x)\rightarrow\complex^{\beta(x)}\}$ and $\{\phi'(x):V/S'(x)\rightarrow\complex^{\beta(x)}\}$. Let $\mathbf{H}'_{\alpha,\beta,\delta\epsilon}$ denote the open locus of the total space where each of these linear maps is injective. It is clearly irreducible, surjective , and smooth over $\mathbf{U}_{*}$. We build an irreducible $\mathbf{H}_{*}$ smoothly over $\mathbf{H}'_{*}$ by attaching spaces of maps $\complex^{\beta(ta)}\rightarrow\complex^{\beta(ha)}$, so that $\{\phi(x)\}$ and $\{\phi'(x)\}$ become morphisms of representations.

To do this, the idea is again repeat applications of the Basic Principle with, for each arrow $a$, the vectors spaces ``$V_1$," ``$V_2$," ``$S_1$," and ``$S_2$" given by $\complex^{\beta(ta)}$, $\complex^{\beta(ha)}$, $(V/S)(ta)$, and $(V/S)(ha)$ respectively, and ``$\phi$" given by $(V/S)(a)$ (and similarly with $S'$ in place of $S$). The formal argument mirrors the construction of $A_3$ over $A_2$.    

\end{section}

\begin{section}{Proof of Theorem \ref{QuiverKTT}}\label{QuiverProof}
We proceed by contradiction via Proposition \ref{QuiverTranslation}. That is, we suppose
\begin{equation*}
2=\dim\SI(Q,\alpha)_{\sigma_{\beta}},
\end{equation*}
and we assume to the contrary that $\dim Y_{\alpha,\beta}\geq2$. Recall from Proposition \ref{GITForQuivers} the ample line bundle $L_Y$ on $Y$. Let $Z\subseteq\Rep(Q,\alpha)$ be the closure of an irreducible component of the preimage of the base locus of $L_Y$. This base locus is nonempty by the dimension assumption on $Y$. For a general element $(W,W')\in \Rep(Q,\beta)^{\times 2}$, the semi-invariants $\det d^{\square}_{W},\det d^{\square}_{W'}$ form a basis for $\SI(Q,\alpha)_{\sigma_{\beta}}=H^0(Y,L_{Y})$ (see Section \ref{QuiverPrelims} and \cite[Section 2]{DerksenWeymanCombinatorics}). In particular, it follows that for a general element $(V,W)\in Z\times\Rep(Q,\beta)$, one has: 
\begin{equation}\label{QuiverContradiction}
\Hom_Q(V,W)\neq 0,
\end{equation}
i.e. $d^V_W$ is noninjective. Let $\delta<\alpha$ be the dimension vector of the kernel of a general morphism of quiver representations $V\rightarrow W$, equivalently such a morphism has rank $\gamma:=\alpha-\delta$. Also let $\epsilon$ be a dimension vector such that given a general element $(V,W,W')\in Z\times\Rep(Q,\beta)^{\times 2}$ and general pair of quiver morphisms $(\phi,\phi')\in\Hom_Q(V,W)\times\Hom_Q(V,W')$, the intersection of $\ker\phi$ and $\ker\phi'$ has dimension $\epsilon$. 

Constructed in Section \ref{ConstructionOfDominatingScheme}, we have the irreducible smooth scheme $\mathbf{H}_{\alpha,\beta,\delta,\epsilon}$, whose closed points are given by $(V,W,W',\phi,\phi')$ which satisfy:
\begin{multline*}
V\in\Rep(Q,\alpha),\textnormal{ }W,W'\in\Rep(Q,\beta),\textnormal{ } \phi\in\Hom_Q(V,W),\textnormal{ }\phi'\in\Hom_Q(V,W'),\\\ker\phi,\ker\phi'\in\Grass(\delta,V),\textnormal{ }(\ker\phi)\cap(\ker\phi')\in\Grass(\epsilon,V).
\end{multline*}
This scheme controls the base locus $Z$ in the sense of the following proposition, whose proof is virtually identical to \cite[Proposition 5.2]{ShermanKTT}.

\begin{Lemma}\label{QuiverDominantMap}
The morphism
\begin{equation}\label{MapToRepQ}
\textnormal{\textbf{H}}_{\alpha,\beta,\delta,\epsilon}\rightarrow \Rep(Q,\alpha)\times\Rep(Q,\beta)\times\Rep(Q,\beta):(V,W,W',\phi,\phi')\mapsto(V,W,W')
\end{equation}
factors through a dominant map $\textnormal{pr}$ to $Z\times\Rep(Q,\beta)\times\Rep(Q,\beta)$.
\end{Lemma}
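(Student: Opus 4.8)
The plan is to show the map (\ref{MapToRepQ}) has image with closure exactly $Z\times\Rep(Q,\beta)^{\times 2}$, by a dimension count combined with the incidence/irreducibility structure built into $\mathbf{H}_{\alpha,\beta,\delta,\epsilon}$. First I would observe that the image of (\ref{MapToRepQ}) lies inside $Z\times\Rep(Q,\beta)^{\times 2}$: a point $(V,W,W',\phi,\phi')$ of $\mathbf{H}_{*}$ has $\Hom_Q(V,W)\ni\phi\ne 0$ (the kernel being $\delta$-dimensional with $\delta<\alpha$), so $V$ lies in the zero locus of the semi-invariant $\det d^{\square}_{W}$; since $\delta$, $\epsilon$ were chosen to be the \emph{generic} kernel dimensions for $(V,W)$ and for the intersection $\ker\phi\cap\ker\phi'$ over the component $Z$, a general point of $Z\times\Rep(Q,\beta)^{\times 2}$ together with a general pair of morphisms does define a point of $\mathbf{H}_{*}$ mapping to it. Thus the image contains a dense constructible subset of $Z\times\Rep(Q,\beta)^{\times 2}$, and since $\mathbf{H}_{*}$ is irreducible (Section \ref{ConstructionOfDominatingScheme}), its image closure is an irreducible closed set squeezed between this dense subset and $Z\times\Rep(Q,\beta)^{\times 2}$; hence it equals $Z\times\Rep(Q,\beta)^{\times 2}$ and the induced map $\mathrm{pr}$ is dominant. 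The factorization through $Z\times\Rep(Q,\beta)^{\times 2}$ is then automatic.

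The step requiring the most care is the claim that a general point of $Z\times\Rep(Q,\beta)^{\times 2}$ is actually hit, i.e.\ that one can simultaneously realize the generic kernel dimension $\delta$ of $\phi$, the generic kernel dimension $\delta$ of $\phi'$ (these agree by symmetry of the two $\Rep(Q,\beta)$ factors, both governed by (\ref{QuiverContradiction})), \emph{and} the generic intersection dimension $\epsilon$ of $\ker\phi\cap\ker\phi'$, all over the same general $(V,W,W')\in Z\times\Rep(Q,\beta)^{\times 2}$. This is exactly where the definitions of $\delta$ and $\epsilon$ at the start of this section do their work: $\delta$ is \emph{defined} as the generic kernel dimension of a morphism $V\to W$ for general $(V,W)\in Z\times\Rep(Q,\beta)$, and $\epsilon$ is \emph{defined} as the generic value of $\dim(\ker\phi\cap\ker\phi')$ for general $(V,W,W',\phi,\phi')$. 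So for $(V,W,W')$ outside a proper closed subset of $Z\times\Rep(Q,\beta)^{\times 2}$, and for $(\phi,\phi')$ outside a proper closed subset of $\Hom_Q(V,W)\times\Hom_Q(V,W')$, the tuple $(V,W,W',\phi,\phi')$ satisfies all the incidence conditions in (\ref{DefinitionOfHAlphaBetaDeltaEpsilon}) defining $\mathbf{H}_{\alpha,\beta,\delta,\epsilon}$, hence is a point of $\mathbf{H}_{*}$ lying over $(V,W,W')$.

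Since the proof is, as the statement says, virtually identical to \cite[Proposition 5.2]{ShermanKTT}, I would keep it brief: state that the image of (\ref{MapToRepQ}) is contained in $Z\times\Rep(Q,\beta)^{\times 2}$ (using that any point of $\mathbf{H}_{*}$ has a nonzero $\Hom_Q(V,W)$, so $\det d^{\square}_W$ vanishes at $V$, and $Z$ is a union of such components of the base-locus preimage — here one uses that all the relevant components have the same generic behavior, or simply restricts attention to the component $Z$ in question), then note that by the choice of $\delta$ and $\epsilon$ the map dominates $Z\times\Rep(Q,\beta)^{\times 2}$, and conclude by irreducibility of $\mathbf{H}_{*}$. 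The only genuine subtlety — and the main obstacle — is bookkeeping: making sure the ``general $V$'' used to define $\delta$ and $\epsilon$ is general \emph{in $Z$} (not in $\Rep(Q,\alpha)$), and that the generic rank $\gamma=\alpha-\delta$ and generic intersection codimension are constant along a dense open of $Z$, which follows from upper-semicontinuity of $\dim\Hom$ and of kernel-intersection dimensions together with irreducibility of $Z$.
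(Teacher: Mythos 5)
The dominance half of your argument is fine: by the very definitions of $\delta$ and $\epsilon$ at the start of Section \ref{QuiverProof}, a general $(V,W,W')\in Z\times\Rep(Q,\beta)^{\times 2}$ together with a general $(\phi,\phi')\in\Hom_Q(V,W)\times\Hom_Q(V,W')$ satisfies all the incidence conditions in (\ref{DefinitionOfHAlphaBetaDeltaEpsilon}), so the closure of the image of (\ref{MapToRepQ}) contains $Z\times\Rep(Q,\beta)^{\times 2}$, and irreducibility of $\mathbf{H}_{*}$ is then used correctly. The gap is in the other half, the ``factors through'' claim, which is the substantive content of the lemma (it is exactly what lets the paper say later that a general point of $\mathbf{H}_{*}$ has $(V,W)$ general in $Z\times\Rep(Q,\beta)$). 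Your justification is that $\phi\neq 0$ forces $\det d^{\square}_{W}$ to vanish at $V$. That places $V$ on the single hypersurface $\{\det d^{\square}_{W}=0\}$ attached to the particular $W$ in the tuple; it does not place $V$ in the preimage of the base locus of $L_Y$ (which is the common zero locus \emph{inside the semistable locus} of a basis of the $2$-dimensional space $\SI(Q,\alpha)_{\sigma_{\beta}}$), and it certainly does not place $V$ in the chosen irreducible component $Z$ rather than in another component or in the unstable locus. The parenthetical ``all the relevant components have the same generic behavior, or simply restrict attention to the component $Z$'' is not an argument: $\mathbf{H}_{*}$ is defined globally by rank conditions and really does contain points whose first coordinate is, say, a decomposable representation $S\oplus W'$ as in the paragraph before Lemma \ref{QuiverGenericityOfInducedStructures}; such points land in $Z$ only because $Z$ is closed and $\textnormal{GL}(Q,\alpha)$-stable (so contains the orbit closure, hence the associated graded, of its points), not for the reason you give.

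The missing argument runs as follows. Every point of the image satisfies $\det d^{V}_{W}=\det d^{V}_{W'}=0$, and this is a \emph{closed} condition on $(V,W,W')$. Let $U\subseteq\Rep(Q,\beta)^{\times 2}$ be the dense open set of pairs $(W,W')$ for which $\det d^{\square}_{W},\det d^{\square}_{W'}$ form a basis of $\SI(Q,\alpha)_{\sigma_{\beta}}$ (here the hypothesis $\dim\SI(Q,\alpha)_{\sigma_{\beta}}=2$ enters). Over $U$, simultaneous vanishing of these two semi-invariants forces $V$ to lie in $\pi^{-1}(\textnormal{Bs}(L_Y))\cup(\Rep(Q,\alpha)\smallsetminus\Rep(Q,\alpha)^{\sigma_{\beta}-SS})$, a fixed finite union of closed irreducible sets, namely the closures $Z=Z_1,\dots,Z_k$ of the components of the base-locus preimage together with the components of the unstable locus. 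Since the image of $\mathbf{H}_{*}$ is irreducible and meets the open set lying over $U$, its closure equals the closure of its intersection with that open set, hence is an irreducible closed set contained in the union of the $Z_i\times\Rep(Q,\beta)^{\times 2}$ and the (unstable)$\times\Rep(Q,\beta)^{\times 2}$ pieces; being irreducible it lies in a single one of them, and since it contains $Z\times\Rep(Q,\beta)^{\times 2}$ (with $Z$ meeting the semistable locus and the $Z_i$ pairwise incomparable), that piece must be $Z\times\Rep(Q,\beta)^{\times 2}$ itself. Without this step --- in particular without using that $(W,W')$ ranges over pairs giving a basis of the two-dimensional space of semi-invariants --- the containment, and hence the lemma, is not proved.
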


Now, there is an irreducible space $Z_{\delta,\epsilon}$ describing all pairs $(S,T)$ consisting of a $\delta$-dimensional representation $S$ of $Q$ with an $\epsilon$-dimensional subrepresentation $T\hookrightarrow S$ (to see this, note that $Z_{\delta,\epsilon}$ is a fiber bundle over $\Rep(Q,\epsilon)$). Fix $(S,T,W)$ a general element of $Z_{\delta,\epsilon}\times\Rep(Q,\beta)$. Since $W$ is general, it follows from Lemma \ref{QuiverDominantMap} that $W$ has a $\gamma$-dimensional subrepsentation $W'$ (which is the image of $V\xrightarrow{\phi} W$ for some $V$ in $Z$), and $W'$ has a $(\delta-\epsilon)$-dimensional subrepresentation $W''$ (which is the image of $S'\xrightarrow{\ker\phi'} V\xrightarrow{\phi} W)$. Let $V_0:=S\oplus W'$ (an $\alpha$-dimensional representation) and let $\phi_0:V_0\rightarrow W$ be the obvious map which has rank $\gamma$ and kernel $S$. Observe that $S':=T\oplus W''$ is a second $\delta$-dimensional subrepresentation of $V_0$ which intersects $S$ in the $\epsilon$-dimensional representation $T$. Therefore, $(V_0,S,S',T)\in\mathbf{U}_{\alpha,\delta,\epsilon}$. By the Basic Principle of Section \ref{QuiverOutline}, one can construct a $\beta$-dimensional representation $W'$ and a morphism $\phi_0':V_0\rightarrow W'$ with kernel $S'$. Thus, $(V_0,W,W',\phi_0,\phi_0')$ is a point of $\mathbf{H}_{\alpha,\beta,\delta,\epsilon}$, where $(S=\ker\phi_0,T=(\ker\phi_0\cap\ker\phi'_0),W)$ is a general element of $Z_{\delta,\epsilon}\times\Rep(Q,\beta)$. Thus, $\mathbf{H}_{\alpha,\beta,\delta,\epsilon}$ dominates $Z_{\delta,\epsilon}\times\Rep(Q,\beta)$, and we have the Lemma below.

\begin{Lemma}\label{QuiverGenericityOfInducedStructures}
Let $(V,W,W',\phi,\phi')$ be a general element of $\mathbf{H}_{\alpha,\beta,\delta,\epsilon}$, with $S:=\ker\phi$, $S':=\ker\phi'$, $T:=S\cap S'$. Then $(S,T,W)$ is a general element of $Z_{\delta,\epsilon}\times\Rep(Q,\beta)$ (e.g. the pair $(S,W)$ is suitable for application of Proposition \ref{QuiverH1PropStatement}).
\end{Lemma}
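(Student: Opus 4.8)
The plan is to prove Lemma \ref{QuiverGenericityOfInducedStructures} by combining the two dominance facts established in the preceding paragraphs into a single statement about general points. First I would recall the setup: we have the irreducible scheme $\mathbf{H}_{\alpha,\beta,\delta,\epsilon}$ together with the forgetful map $(V,W,W',\phi,\phi')\mapsto(S,T,W)$, where $S=\ker\phi$, $S'=\ker\phi'$, $T=S\cap S'$; this lands in the irreducible space $Z_{\delta,\epsilon}\times\Rep(Q,\beta)$ (irreducibility of $Z_{\delta,\epsilon}$ being the fiber-bundle observation over $\Rep(Q,\epsilon)$ already noted in the text). The content of the paragraph immediately preceding the lemma is precisely that this forgetful map is \emph{dominant}: given a general $(S,T,W)\in Z_{\delta,\epsilon}\times\Rep(Q,\beta)$, the construction via $V_0=S\oplus W'$, $S'=T\oplus W''$, and the Basic Principle produces a point of $\mathbf{H}_{\alpha,\beta,\delta,\epsilon}$ lying over $(S,T,W)$.

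With this in hand, the lemma is a formal consequence: a dominant morphism $f\colon X\to Y$ of irreducible varieties (or schemes) over $\complex$ carries a general point of $X$ to a general point of $Y$, in the sense that the image of $X$ contains a dense open subset $U\subseteq Y$, and the preimage $f^{-1}(U)$ is dense open in $X$. So I would phrase the argument as: let $U\subseteq Z_{\delta,\epsilon}\times\Rep(Q,\beta)$ be a dense open subset contained in the image of the forgetful map (which exists by Chevalley's theorem applied to the dominant map from the preceding paragraph, or more directly since the image contains the general-point set exhibited there); then $f^{-1}(U)$ is a dense open subset of $\mathbf{H}_{\alpha,\beta,\delta,\epsilon}$, and for any $(V,W,W',\phi,\phi')$ in this open set the triple $(S,T,W)$ lies in $U$, hence is a general element of $Z_{\delta,\epsilon}\times\Rep(Q,\beta)$. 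The parenthetical claim about Proposition \ref{QuiverH1PropStatement} then follows because for a general $W$, the hypotheses (i) and (ii) of that proposition --- constancy of $\dim\Hom_Q(S,W)$ and generic rank of morphisms $S\to W$ over a dense open set of $W$'s --- hold by semicontinuity on the irreducible $\Rep(Q,\beta)$, and $(S,W)$ general in particular lies over such a $W$ with $S$ general in $\Rep(Q,\delta)$ (projecting $Z_{\delta,\epsilon}$ to $\Rep(Q,\delta)$).

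I do not expect a genuine obstacle here; the real work was done in constructing $\mathbf{H}_{\alpha,\beta,\delta,\epsilon}$ and in the explicit $V_0=S\oplus W'$ construction that exhibits dominance. The only point requiring a little care is making sure the two uses of "general" are compatible --- that is, that the dense open locus in $\mathbf{H}_{\alpha,\beta,\delta,\epsilon}$ on which the generic-rank and kernel-dimension conditions defining $\delta$ and $\epsilon$ hold (so that the forgetful map is even defined with the right target) meets the preimage $f^{-1}(U)$; but this is automatic since both are dense open in the irreducible scheme $\mathbf{H}_{\alpha,\beta,\delta,\epsilon}$, so their intersection is again dense open and nonempty. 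Thus the lemma reduces to the elementary fact that a finite intersection of dense open subsets of an irreducible scheme is dense open, applied to the dominance statement already proved.
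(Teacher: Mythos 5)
Your proposal is correct and takes essentially the same route as the paper: the paper likewise treats the lemma as an immediate formal consequence of the dominance of the forgetful map $\mathbf{H}_{\alpha,\beta,\delta,\epsilon}\rightarrow Z_{\delta,\epsilon}\times\Rep(Q,\beta)$, which is exactly what the explicit $V_0=S\oplus W'$, $S'=T\oplus W''$ construction in the preceding paragraph establishes. Your added remarks on pulling back dense open subsets along a dominant map of irreducible schemes and on intersecting the relevant dense open loci just make explicit what the paper leaves implicit.
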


Now take a general element of $(V,W,W',\phi,\phi')\in\mathbf{H}_{\alpha,\beta,\delta,\epsilon}$ with $S:=\ker\phi$, $S':=\ker\phi'$, $T:=S\cap S'$. The above discussion shows that we may assume:
\begin{itemize}
\item[  i.] $V$ is $\sigma_{\beta}$-semistable.
\item[ ii.] $(V,W)$ is a general element of $Z\times\Rep(Q,\beta)$.
\item[iii.] $(S,T,W)$ is a general element of $Z_{\delta,\epsilon}\times\Rep(Q,\beta)$.
\end{itemize}
By ii and Proposition \ref{QuiverH1PropStatement}, we have $\Ext_Q(V,W)\cong\Ext_Q(S,W)$. By i, every subrepresentation $R$ of $S$ satisfies $\langle\dim R,\beta\rangle\geq 0$. By iii, we may apply Proposition \ref{QuiverFinalInduction} to $(S,T,W)$ and conclude that $\Ext_Q(V,W)=0$. Hence, 
\begin{equation}
\dim\Hom_Q(V,W)=\langle\alpha,\beta\rangle_Q+\dim\Ext_Q(V,W)=0+0
\end{equation}
This contradicts (\ref{QuiverContradiction}).
\end{section}

\begin{section}{Vanishing of Ext for $S$}\label{QuiverDescendToS}
We now ignore the previous context and prove Proposition \ref{QuiverFinalInduction} independent of the foregoing discussion. Let $\epsilon\leq\delta$ be dimension vectors. Let $Z_{\delta,\epsilon}$ be the irreducible space consisting of a $\delta$-dimensional representation $S$ with an $\epsilon$-dimensional subrepresentation $T$. The following is a variant of \cite[Theorem 3]{DerksenWeymanSemiInvariants} (see also \cite[Proposition 6.2]{ShermanKTT}).

\begin{Proposition}\label{QuiverFinalInduction}
Suppose $(S_0,T_0,W_0)\in Z_{\delta,\epsilon}\times\Rep(Q,\beta)$ is a general element. Suppose moreover that every subrepresentation $R$ of $S_0$ satisfies the inequality $\langle\dim R,\beta\rangle\geq0$. Then $\Ext_Q(S_0,W_0)=0$.
\end{Proposition}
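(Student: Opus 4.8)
The plan is to induct on $\dim\delta$ (or on the number of vertices where $\delta$ is supported), reducing the vanishing of $\Ext_Q(S_0,W_0)$ for a general $(S_0,T_0,W_0)$ to an instance of Proposition \ref{QuiverH1PropStatement} applied to a smaller subrepresentation. The base case is when $S_0$ is $\sigma_\beta$-stable, or more simply when $S_0$ has no proper nonzero subrepresentation with $\langle\dim R,\beta\rangle=0$; there one argues directly as in \cite[Theorem 3]{DerksenWeymanSemiInvariants}/\cite[Proposition 6.2]{ShermanKTT} that the generic $S_0$ admits an injection into $W_0^{\oplus k}$ for suitable $k$, forcing $\Hom_Q(S_0,W_0)$ to have the minimal possible dimension $\max(0,\langle\delta,\beta\rangle)$, and hence $\Ext_Q(S_0,W_0)=0$. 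The hypothesis that \emph{every} subrepresentation $R$ of $S_0$ satisfies $\langle\dim R,\beta\rangle\ge 0$ is exactly what is needed to run this argument: it says $S_0$ is ``$\sigma_\beta$-semistable as a source'' and guarantees (via the genericity of $W_0$ and the Derksen--Weyman semistability criterion) that a general $\phi\in\Hom_Q(S_0,W_0^{\oplus k})$ is injective.

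For the inductive step, first use genericity in $Z_{\delta,\epsilon}\times\Rep(Q,\beta)$ to control $\Hom_Q(S_0,W_0)$: let $\eta$ be the rank of a general morphism $S_0\to W_0$ and let $T_0':=\ker\phi$ for such a $\phi$, a $(\delta-\eta)$-dimensional subrepresentation of $S_0$. Proposition \ref{QuiverH1PropStatement}, applied with $V\rightsquigarrow S_0$ and $W\rightsquigarrow W_0$ (the genericity of the pair $(S_0,W_0)$ inside the relevant closed subset being guaranteed by its membership in $Z_{\delta,\epsilon}\times\Rep(Q,\beta)$, which is what condition iii in Section \ref{QuiverProof} and Lemma \ref{QuiverGenericityOfInducedStructures} arrange), reduces the vanishing of $\Ext_Q(S_0,W_0)$ to the vanishing of $\Ext_Q(T_0',W_0)$. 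It remains to check that $T_0'$ together with some natural subrepresentation of it, and $W_0$, is again a general element of the corresponding $Z_{\delta-\eta,\epsilon'}\times\Rep(Q,\beta)$ (with $\epsilon'$ the generic intersection dimension of two such kernels) and that every subrepresentation $R\subseteq T_0'\subseteq S_0$ still satisfies $\langle\dim R,\beta\rangle\ge0$ — the latter is immediate since subrepresentations of $T_0'$ are subrepresentations of $S_0$. Then the inductive hypothesis, applied to $\delta-\eta<\delta$, yields $\Ext_Q(T_0',W_0)=0$, and we are done.

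The main obstacle is the genericity bookkeeping in the inductive step: one must show that passing from $S_0$ to $T_0'=\ker\phi$ (for $\phi$ a general morphism $S_0\to W_0$) does not destroy the property of being a \emph{general} element of a space of the form $Z_{\delta',\epsilon'}$, so that Proposition \ref{QuiverH1PropStatement} can legitimately be invoked a second time. This is precisely the kind of issue handled in Section \ref{ConstructionOfDominatingScheme} and Lemma \ref{QuiverGenericityOfInducedStructures} for the ambient problem, and the same machinery — realizing the pair $(T_0', \text{subrep})$ as the generic fiber of a dominant map from an irreducible scheme built by iterated applications of the Basic Principle — should apply here; the subtlety is making sure the dimension vectors $\eta$, $\delta-\eta$, $\epsilon'$ are the generic ones and that the relevant schemes remain irreducible. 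A secondary point requiring care is the base case: one must verify that the ``injects into $W_0^{\oplus k}$'' argument of Derksen--Weyman goes through under the hypothesis $\langle\dim R,\beta\rangle\ge0$ for all $R\subseteq S_0$ rather than the usual strict/semistability phrasing, but this is essentially a restatement and should follow from \cite[Theorem 3]{DerksenWeymanSemiInvariants} together with the saturation theorem cited in Proposition \ref{GITForQuivers}.
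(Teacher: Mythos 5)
Your overall strategy matches the paper's: induct on the total dimension $\sum_x\delta(x)$, pass to the kernel $\tilde{S}_0$ of a general morphism $S_0\to W_0$, use Proposition \ref{QuiverH1PropStatement} to identify $\Ext_Q(S_0,W_0)$ with $\Ext_Q(\tilde{S}_0,W_0)$, observe that subrepresentations of $\tilde{S}_0$ inherit the inequality, and recurse. But the step you yourself flag as ``the main obstacle'' --- that $(\tilde{S}_0,\,\cdot\,,W_0)$ is again a \emph{general} point of the appropriate $Z_{\tilde{\delta},\tilde{\epsilon}}\times\Rep(Q,\beta)$ --- is asserted rather than proved, and it is the entire content of the inductive step. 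The paper closes it by building the irreducible scheme $\mathbf{H}_{\delta,\beta,\epsilon,\tilde{\delta},\tilde{\epsilon}}$ (points: nested data $S\supseteq T$, $S\supseteq\tilde{S}$, $\tilde{T}=T\cap\tilde{S}$, together with $\psi:S\to W$ having kernel $\tilde{S}$) and showing it dominates \emph{both} $Z_{\delta,\epsilon}\times\Rep(Q,\beta)$ (by choice of $\tilde{\delta},\tilde{\epsilon}$) and $Z_{\tilde{\delta},\tilde{\epsilon}}\times\Rep(Q,\beta)$; the latter dominance is the nontrivial point, proved by starting from a general $(\tilde{S},\tilde{T},W)$ and manufacturing a preimage via $S:=\tilde{S}\oplus W'$, $T:=\tilde{T}\oplus W''$ with $W'\subseteq W$, $W''\subseteq W'$ the subrepresentations supplied by the earlier dominance results. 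Note also that the auxiliary subrepresentation carried through the induction is $\tilde{T}:=\tilde{S}\cap T$ for the \emph{given} $T$, not ``the generic intersection dimension of two such kernels''; the whole reason $T$ appears in the statement is that $S_0$ is only known to be general among $\delta$-dimensional representations admitting an $\epsilon$-dimensional subrepresentation, and this constraint must be propagated.

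Your base case is also off target. The induction does not terminate at stability of $S_0$; it terminates either at $\delta=0$ or in the degenerate case where the kernel of a general morphism $S\to W$ is all of $S$, i.e.\ $\Hom_Q(S_0,W_0)=0$. In that case the hypothesis applied to $R=S_0$ gives $0\le\langle\delta,\beta\rangle=\dim\Hom_Q(S_0,W_0)-\dim\Ext_Q(S_0,W_0)=-\dim\Ext_Q(S_0,W_0)$, and you are done --- no injection of $S_0$ into $W_0^{\oplus k}$ is needed. Moreover the sketched justification for that injection would not go through as stated: generic-position results for $\Rep(Q,\delta)\times\Rep(Q,\beta)$ in the style of \cite{DerksenWeymanSemiInvariants} do not directly apply, because $S_0$ is general only in $Z_{\delta,\epsilon}$, whose projection to $\Rep(Q,\delta)$ need not be dominant. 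So the two places where your proposal substitutes a citation or an ``it should work'' for an argument are exactly the two places where the proof lives.
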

\begin{proof}
We proceed by induction on the number $M_{\delta}:=\sum_{x\in Q_0}\delta(x)$. If $M_{\delta}=0$, then the conclusion holds trivially. Assume $M_{\delta}\geq1$. Let $\tilde{\delta}\leq\delta$ be a dimension vector such that if $(S,T,W)$ is a general point of $Z_{\delta,\epsilon}\times\Rep(Q,\beta)$, then a general morphism of representations $\psi:S\rightarrow W$ has kernel of dimension $\tilde{\delta}$. If $\tilde{\delta}=\delta$, then for a general $(S,T,W)$, one has $\Hom_Q(S,W)=0$. On the other hand, by assumption 
\begin{equation*}
0\leq\langle\delta,\beta\rangle=\dim\Hom_Q(S_0,W_0)-\dim\Ext_Q(S_0,W_0),
\end{equation*}
so the conclusion follows in this case. We may as well assume then that $M_{\tilde{\delta}}<M_{\delta}$.

Suppose also that for a general $(S,T,W)$, the $\tilde{\delta}$-dimensional kernel $\tilde{S}$ of a general morphism $\psi:S\rightarrow W$ meets $T$ in an $\tilde{\epsilon}$-dimensional subreprsentation $\tilde{T}$. Let $\mathbf{U}_{\delta,\epsilon,\tilde{\delta},\tilde{\epsilon}}$ be the irreducible smooth scheme whose points are $(S,T,\tilde{S},\tilde{T})$ of the corresponding dimensions such that $S\supseteq T$, $S\supseteq\tilde{S}$, and $\tilde{T}=T\cap\tilde{S}$ (the construction is identical to that of Section \ref{ConstructionOfDominatingScheme}). Build over $\mathbf{U}_{\delta,\epsilon,\tilde{\delta},\tilde{\epsilon}}$ the smooth, irreducible scheme $\mathbf{H}_{\delta,\beta,\epsilon,\tilde{\delta},\tilde{\epsilon}}$ whose fiber over $(S,T,\tilde{S},\tilde{T})$ is the space of $(W,\psi)$ where $W$ is a $\beta$-dimensional representation and $\psi:S\rightarrow W$ has kernel $\tilde{S}$. By choice of $\tilde{\delta}$, $\tilde{\epsilon}$, the map $\pi:\mathbf{H}_{\delta,\beta,\epsilon,\tilde{\delta},\tilde{\epsilon}}\rightarrow Z_{\delta,\epsilon}\times\Rep(Q,\beta)$ is dominant.

Let $(\tilde{S},\tilde{T},W)$ be a general element of $Z_{\tilde{\delta},\tilde{\epsilon}}\times\Rep(Q,\beta)$. Since $W$ is general, it possesses a $(\delta-\tilde{\delta})$-dimensional subrepresentation $W'$, which in turn has a $(\epsilon-\tilde{\epsilon})$-dimensional subrepresentation $W''$ (see the argument preceding Lemma \ref{QuiverGenericityOfInducedStructures}). Now consider $S:=\tilde{S}\oplus W'$ and the obvious morphism $\psi:S\rightarrow W$ with kernel $S$ and rank $\delta-\tilde{\delta}$. If $T:=\tilde{T}\oplus W''$, then $(S,T,\tilde{S},\tilde{T},W,\psi)$ is an element of $\mathbf{H}_{\delta,\beta,\epsilon,\tilde{\delta},\tilde{\epsilon}}$. Since $(\tilde{S},\tilde{T},W)$ is generic, this proves the map $\tilde{\pi}:\mathbf{H}_{\delta,\beta,\epsilon,\tilde{\delta},\tilde{\epsilon}}\rightarrow Z_{\tilde{\delta},\tilde{\epsilon}}\times\Rep(Q,\beta)$ is also dominant. In particular, if $\psi_0$ is a general element of the fiber in $\mathbf{H}_{\delta,\beta,\epsilon,\tilde{\delta},\tilde{\epsilon}}$ over the general element $(S_0,T_0,W_0)$, we can assume that the induced element $(\tilde{S}_0:=\ker\psi_0,\tilde{T}_0:=\tilde{S}_0\cap T_0,W_0)$ of $Z_{\tilde{\delta},\tilde{\epsilon}}\times\Rep(Q,\beta)$ is generic.

Now by Proposition \ref{QuiverH1PropStatement}, $\Ext_Q(S_0,W_0)\cong\Ext_Q(\tilde{S}_0,W)$. Clearly every subrepresentation of $\tilde{S}_0$, being also a subrepresentation of $S_0$, satisfies the appropriate inequality. By genericity of $(\tilde{S}_0,\tilde{T}_0,W_0)$, the inductive hypothesis now completes the proof.
\end{proof}
\end{section}

\begin{section}{Connection to Invariants of Tensor Products}\label{LWConnection}
We now show how Theorem \ref{QuiverKTT} gives the main theorem of \cite{ShermanKTT} as a corollary. Indeed, the relationship between semi-invariants of so-called triple flag quivers and $\SL_r$ invariants of three-fold tensor products is well-known \cite[Section 3 Proposition 1]{DerksenWeymanSemiInvariants}. We prove a geometric generalization, namely that the polarized moduli space of semistable representations is isomorphic to the polarized moduli space of semistable parabolic vector spaces, where in both cases semistability is determined by given Young diagrams $\lambda^1,...,\lambda^s$. 

For $p=1,...,s$ with $s\geq 3$, let $\lambda^p$ be a partition with at most $r-1$ nonzero parts and no part greater than $\ell$. Assume also the partitions satisfy the ``codimension condition:'' 
\begin{equation}\label{ClassicalCodimCond}
r\ell-\sum_{p=1}^s\sum_{a=1}^{r-1}\lambda^p_a=0
\end{equation} 
Note that there must be some such $\ell$ if the tensor product corresponding to $\lambda^1,...,\lambda^s$ is to have invariants.

Let $0<\delta^p_1<...<\delta^p_{C(\lambda^p)}<r$ be the distinct column lengths in $\lambda^p$ and suppose that there are $b(\lambda^p)_i$-many columns of length $\delta^p_i$. Let $X^p$ be the partial flag variety of flags 
\begin{equation*}
0\subset F^p_{\delta^p_1}\subset F^p_{\delta^p_2}\subset...\subset F^p_{\delta^p_{C(\lambda^p)}}\subset \complex^r,
\end{equation*}
where subscripts denote dimensions. We have $\text{Pic}(X^p)=\oplus_{i=1}^{C(\lambda^p)}\mathbb{Z}[L_i]$, where $L_i$ is the pullback of the ample generator of $\text{Pic}(\Grass(\delta^p_i,\complex^r))\cong \mathbb{Z}$ along the canonical projection from $X^p$ \cite{BrionLecturesGeometryOfFlags}. Each $L_i$ has a canonical $\SL_r$-equivariant structure compatible with the usual $\SL_r$ action on $X^p$, so that every line bundle on $X^p$ obtains such a structure. In the sequel, we will take this equivariant structure as implicit. 

With the above description of $\text{Pic}(X^p)$, one has an $\SL_r$-equivariant line bundle
\begin{equation*}
\tilde{\LL}_{\lambda}=\prod_{p=1}^s(b(\lambda^p)_1,...,b(\lambda^p)_{C(\lambda^p)})
\end{equation*}
on $X:=\prod_{p=1}^s X^p$ (for $\SL_r$ acting diagonally). The semistable points with respect to this linearization are those $\FF=\{F^p_{\bullet}\}_{p=1}^s\in X$ such that if $S$ is an $r'$ dimensional subspace of $\complex^r$, then 
\begin{equation}\label{SemistabilityForPartialFlags}
\sum_{p=1}^s\sum_{i=1}^{C(\lambda^p)} b(\lambda^p)_i\dim(F^p_{\delta^p_i}\cap S)\leq r'\ell.
\end{equation}
There is an integral, projective good quotient $\rho:X^{SS}\rightarrow \MM_{\lambda}$ for the action of $\SL_r$, where $\MM_{\lambda}$ has rational singularities. The line bundle $\tilde{\LL}_{\lambda}$ descends to an ample line bundle $\LL_{\lambda}$ on $\MM_{\lambda}$. Moreover, one has a natural isomorphism for each positive integer $n$:
\begin{equation*}
H^0(\MM_{\lambda},\LL_{\lambda}^{\otimes n})=(V^*_{n\lambda^1}\otimes...\otimes V^*_{n\lambda^s})^{\SL_r}.
\end{equation*}
See \cite[Section 2]{ShermanKTT} for a summary with appropriate references. 

We saw similarly in Section \ref{QuiverGIT} that for a cycle-free quiver $Q$ and dimension vectors $\alpha,\beta$ of $Q$, one has a moduli space with an ample line bundle $(Y_{\alpha,\beta},L_{Y})$, where sections of tensor powers of $L_Y$ give $\sigma_{\beta}$ semi-invariants of $\Rep(Q,\alpha)$. The goal now is to show that for the right of choice of $Q,\alpha,\beta$, the polarized moduli spaces $(Y_{\alpha,\beta},L_Y)$ and $(\MM_{\lambda},\LL_{\lambda})$ are actually the same. To this end, let $Q$ be the $s$-partial flag quiver of vertices labeled $1^p, 2^p, ..., C(\lambda^p)^p$ for $p=1,...,s$ and one additional vertex $r=(C(\lambda^1)+1)^1=...=(C(\lambda^s)+1)^s$, with arrows $i^p\rightarrow (i+1)^p$ for all $p=1,...,s$ and $i=1,...,C(\lambda^p)$. Let $\alpha$ be the dimension vector given by $\alpha(i^p)=\delta^p_i$ and $\alpha(r)=r$. For example, if $r=4$, $\ell=5$, $s=3$, $\lambda^1=5\geq 2\geq 1$, $\lambda^2=\lambda^3=4\geq 2$, an element of $\Rep(Q,\alpha)$ is depicted below.
\begin{equation*}
\begindc{\commdiag}[50]
\obj(0,2)[02]{$\complex$}
\obj(1,2)[12]{$\complex^2$}
\obj(2,2)[22]{$\complex^3$}
\mor{02}{12}{$\phi^1_1$}
\mor{12}{22}{$\phi^1_2$}
\obj(1,1)[11]{$\complex$}
\obj(2,1)[21]{$\complex^2$}
\mor{11}{21}{$\phi^2_1$}
\obj(1,0)[10]{$\complex$}
\obj(2,0)[20]{$\complex^2$}
\mor{10}{20}{$\phi^3_1$}
\obj(3,1)[Final]{$\complex^4$}
\mor{22}{Final}{$\phi^1_3$}
\mor{21}{Final}{$\phi^2_2$}
\mor{20}{Final}{$\phi^3_2$}
\enddc
\end{equation*}
Let $\beta$ be the dimension vector $\beta(i^p)=\ell-\sum_{i=1}^{C(\lambda^p)}b(\lambda^p)_i$. Then $\langle \alpha,\beta\rangle_Q=0$ because of (\ref{ClassicalCodimCond}). As in Section \ref{QuiverPrelims}, we have an associated weight $\sigma_{\beta}$, given in this case by $\sigma_{\beta}(i^p)=b(\lambda^p)_i$ and $\sigma_{\beta}(r)=-\ell$. 

Let $G=\prod_{p=1}^s\prod_{i=1}^{C(\lambda^p)}\text{Aut}(\complex^{\delta^p_i})$, so that $\text{GL}(Q,\alpha)=G\times\text{Aut}(\complex^r)$. Denote by $\Rep(Q,\alpha)_{\text{inj}}$ the open locus of $\phi\in\Rep(Q,\alpha)$ such that $\phi^p_i$ is injective for all $p=1,...,s$ and $i=1,...,C(\lambda^p)$. One has a map $f:\Rep(Q,\alpha)_{\text{inj}}\rightarrow\prod_{p=1}^s X^p$ which sends $\phi$ to the $s$-tuple of flags whose $p$th flag $F^p_{\bullet}$ is given by $F^p_{\delta^p_i}=\text{Im}(\phi^p_{C(\lambda^p)}\circ...\circ\phi^p_{i+1}\circ\phi^p_i)$.

\begin{Lemma}\label{FlagsAreAGITQuotientOfQuiverReps}
The map $f$ is a geometric quotient of $\Rep(Q,\alpha)_{\textnormal{inj}}$ by $G\times\{\textnormal{Id}\}$.
\end{Lemma}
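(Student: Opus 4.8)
The plan is to exhibit $f$ as a geometric quotient by verifying the defining properties directly, using the local structure of the partial flag varieties $X^p$ as $\mathrm{GL}$-quotients of tuples of injective maps. First I would reduce to a single flag factor: since $G\times\{\mathrm{Id}\}=\prod_{p,i}\mathrm{Aut}(\complex^{\delta^p_i})$ and the map $f$ is a product over $p$ of maps $f^p:\prod_{i}\Hom_{\mathrm{inj}}(\complex^{\delta^p_i},\complex^{\delta^p_{i+1}})\to X^p$ (with $\complex^{\delta^p_{C(\lambda^p)+1}}=\complex^r$ fixed), it suffices to treat each $f^p$ separately. So the real content is the classical statement that a partial flag variety $X^p$ is the geometric quotient of the variety of chains of injections by the product of general linear groups acting on the intermediate spaces.

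Next I would establish the three ingredients of a geometric quotient in the sense used in the paper: (a) $f$ is $G$-invariant and surjective, (b) the fibers of $f$ are exactly the $G$-orbits, and (c) $f$ has the correct local/universal structure (e.g. $\OO_{X^p}=(f^p_*\OO)^{G}$, and $f^p$ is a locally trivial principal bundle, which over a field of characteristic zero and for special-linear-type groups one gets from local triviality in the Zariski topology). For surjectivity: given a flag $0\subset F^p_{\delta^p_1}\subset\cdots\subset F^p_{\delta^p_{C}}\subset\complex^r$, pick any isomorphisms $\complex^{\delta^p_i}\xrightarrow{\sim}F^p_{\delta^p_i}$ and let $\phi^p_i$ be the composite $\complex^{\delta^p_i}\xrightarrow{\sim}F^p_{\delta^p_i}\hookrightarrow F^p_{\delta^p_{i+1}}\xleftarrow{\sim}\complex^{\delta^p_{i+1}}$; then $f^p$ sends this point to the given flag. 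For the fiber/orbit identification: if two chains $(\phi^p_i)$ and $(\psi^p_i)$ of injections have the same image flag, then inductively (starting from the top, where $\mathrm{Im}\,\phi^p_{C}=\mathrm{Im}\,\psi^p_{C}=F^p_{\delta^p_{C}}$) the maps $\psi^p_i\circ(\phi^p_i)^{-1}$ (interpreted on images) assemble to elements $g_i\in\mathrm{Aut}(\complex^{\delta^p_i})$ with $\psi^p_i=g_{i+1}\phi^p_i g_i^{-1}$, where $g_{C+1}=\mathrm{Id}$ on $\complex^r$; conversely any such $G$-action preserves all images, hence fibers. The key point enabling both the local triviality and the structure-sheaf statement is that locally on $X^p$ one has a section: over the open set where the flag is complementary to a fixed reference flag, one builds an explicit chain of injections depending algebraically on the flag, trivializing the bundle.

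I would then assemble these facts: $f$ is a surjective $G$-invariant morphism whose geometric fibers are single $G$-orbits and which is Zariski-locally a product projection $U\times G\to U$, so by the standard descent/quotient criteria it is a geometric quotient; because the structure is étale-locally (indeed Zariski-locally) trivial, $f_*\OO_{\Rep(Q,\alpha)_{\mathrm{inj}}}{}^{G}=\OO_{X}$ and it is moreover a principal $G$-bundle. The main obstacle is the bookkeeping in the fiber-equals-orbit step: one must carefully track that the intermediate spaces $\complex^{\delta^p_i}$ are only determined up to the $\mathrm{Aut}(\complex^{\delta^p_i})$-action and set up the inductive identification of two chains with coinciding flags without tacitly choosing bases incompatibly; once the recursion from the top vertex down is organized correctly this is routine. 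The local-section/local-triviality step is a second point requiring a little care, but it is standard for flag varieties (it is exactly the statement that $X^p$ is covered by the big cells, over each of which the tautological chain of subspaces admits an algebraic splitting), and I would simply cite the local structure of flag varieties as in \cite{BrionLecturesGeometryOfFlags} rather than redo it.
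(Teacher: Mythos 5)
Your proposal is correct, but it takes a genuinely different route from the paper's. The paper does not verify the quotient axioms by hand: it linearizes the $G\times\{\textnormal{Id}\}$-action on $\Rep(Q,\alpha)$ by the character $(g_x)_x\mapsto\prod_x\det(g_x)$, observes that for this linearization the semistable and stable loci both coincide with $\Rep(Q,\alpha)_{\textnormal{inj}}$, and thereby obtains the \emph{existence} of a geometric quotient from GIT; it then shows that the induced map from this abstract quotient to $X$ is bijective and concludes it is an isomorphism by Zariski's main theorem. The common core of the two arguments is the top-down induction identifying two chains of injections with the same image flag --- your ``fibers are exactly the orbits'' step is word-for-word the paper's injectivity argument, with the same recursion $\phi'^p_{i}=g^p_{i+1}\circ\phi^p_{i}\circ(g^p_i)^{-1}$ anchored at the fixed top vertex. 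Where you differ is in how the quotient structure is produced: you check local triviality and the condition $\OO_X=(f_*\OO)^{G}$ directly via sections over the big cells, which makes the argument self-contained and yields the slightly stronger conclusion that $f$ is a Zariski-locally trivial principal $G$-bundle (the action is indeed free on $\Rep(Q,\alpha)_{\textnormal{inj}}$, by the same induction). The paper instead leans on King's GIT machinery, already in use throughout, and pays for it with the normality/ZMT step at the end. Your factor-by-factor reduction over $p$ is harmless and essentially implicit in the paper's componentwise notation.
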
 
\begin{proof}
The case $s=1$, $X=\Grass(a,\complex^r)$ is standard (see e.g. \cite[Section 8.1]{MukaiInvariants}). The general case proceeds much the same way. Linearize the action of $G\times\{\text{Id}\}$ on $\Rep(Q,\alpha)$ by the character which takes $(g_x)_{x\in Q_0-\{r\}}$ to $\prod_{x\in Q_0-\{r\}}\det(g_x)$. It is easy to see that with respect to this linearization, we have $\Rep(Q,\alpha)^{SS}=\Rep(Q,\alpha)^S=\Rep(Q,\alpha)_{\text{inj}}$. Therefore, one has a geometric quotient $Y$ of $\Rep(Q,\alpha)_{\text{inj}}$. Clearly $f$ is constant on $G\times\{\text{Id}\}$ orbits, so $f$ descends to $\tilde{f}$ on the geometric quotient of $\Rep(Q,\alpha)_{\text{inj}}$. Now $\tilde{f}$ is surjective (because $f$ is), and $\tilde{f}$ is injective by the following simple argument. If $\phi=\prod_{p=1}^s(\phi^p_1,...,\phi^p_{r-1})$ and $\phi'=\prod_{p=1}^s(\phi'^p_1,...,\phi'^p_{r-1})$ have the same image in $X$, then in particular, $\phi^p_{r-1}$ and $\phi'^p_{r-1}$ have the same image in $\complex^r$. Thus, there is a $g^p_{r-1}\in\text{GL}_{r-1}$ such that $\phi'^p_{r-1}=\phi^p_{r-1}\circ (g^p_{r-1})^{-1}$. Similarly, we have $\psi^p_{r-2}:=\phi^p_{r-1}\circ\phi^p_{r-2}$ equals $\psi'^p_{r-2}$ defined likewise, whence there is $g^p_{r-2}\in\text{GL}_{r-2}$ such that $\psi'^p_{r-2}=\psi^p_{r-2}\circ (g^p_{r-2})^{-1}$. Expanding this out and canceling $\phi^p_{r-1}$ on opposite sides, we obtain $\phi'^p_{r-2}=g^p_{r-1}\circ\phi^p_{r-2}\circ(g^p_{r-2})^{-1}$. Continuing in this fashion, we see that $\phi$ and $\phi'$ are in the same orbit. By Zariski's main theorem, $\tilde{f}$ is an isomorphism.
\end{proof}

Now consider a $\sigma_{\beta}$-semistable point $\phi\in\Rep(Q,\alpha)^{\sigma_{\beta}-SS}$. If for some $1\leq i_0\leq r-1$, $1\leq p_0\leq s$, the component $\phi^{p_0}_{i_0}$ has a kernel containing a nonzero vector $v\in\complex^{\delta^{p_0}_{i_0}}$, then $\phi$ has a subrepresentation of dimension vector $(\dim\psi)(i^p)=0$ for $i^p\neq i^{p_0}_0$ and $(\dim\psi)(i^{p_0}_0)=1$. It is given by the map $\complex\rightarrow\complex^{\delta^{p_0}_{i_0}}$ which sends $1$ to $v$. In this case we have $\sigma_{\beta}(\dim\psi)=b(\lambda^{p_0})_{i_0}$, which is strictly positive, violating semistability of $\phi$. Thus, $\Rep(Q,\alpha)^{\sigma_{\beta}-SS}\subseteq\Rep(Q,\alpha)_{\text{inj}}$. Now an easy calculation comparing the inequalities (\ref{SemistabilityForPartialFlags}) to those of $\sigma_{\beta}$-semistability shows that $f^{-1}(X^{SS})=\Rep(Q,\alpha)^{\sigma_{\beta}-SS}$. Therefore, by Lemma \ref{FlagsAreAGITQuotientOfQuiverReps}, the variety $X^{SS}$ is the good quotient $\Rep(Q,\alpha)^{\sigma_{\beta}-SS}//(G\times\{\text{Id}\})$. Since a morphism from $\Rep(Q,\alpha)$ is $\text{GL}(Q,\alpha)$ invariant if and only if it is $G\times\SL_r$ invariant, one has:
\begin{equation*}
Y_{\alpha,\beta}=\Rep(Q,\alpha)^{\sigma_{\beta}-SS}//(G\times\SL_r)=(X^{SS})//\textnormal{SL}_r=\MM_{\lambda}.
\end{equation*} 

It now remains to prove that the line $\LL_{\lambda}$ and $L_Y$ agree under this identification. We will need some lemmas.

\begin{Lemma}\label{LineBundlesAgree1}
A section $s$ in $H^0(\Rep(Q,\alpha),L_{\sigma_{\beta}})$ is $\textnormal{GL}(Q,\alpha)$ invariant if and only if it is $G\times\SL_r$ invariant.
\end{Lemma}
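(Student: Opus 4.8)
The plan is to reduce the claimed equivalence to the fact that the character group of $\textnormal{GL}(Q,\alpha)$ differs from that of $G\times\SL_r$ in a way that is invisible on the particular line bundle $L_{\sigma_\beta}$. First I would recall that $\textnormal{GL}(Q,\alpha)=G\times\textnormal{Aut}(\complex^r)=G\times\textnormal{GL}_r$, and that $G\times\SL_r$ is a (normal) subgroup with quotient isomorphic to $\complex^*$ via the determinant on the last factor. A section $s\in H^0(\Rep(Q,\alpha),L_{\sigma_\beta})$ — which by definition of $L_{\sigma_\beta}$ is just a regular function $f$ on $\Rep(Q,\alpha)$ with the twisted action — is $\textnormal{GL}(Q,\alpha)$-invariant iff $f(gV)=\sigma_\beta(g^{-1})f(V)$ for all $g$, exactly as in the proof of Lemma \ref{SemiInvariantsAreSections}. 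The implication ``$\textnormal{GL}(Q,\alpha)$-invariant $\Rightarrow$ $G\times\SL_r$-invariant'' is immediate since $G\times\SL_r\subseteq\textnormal{GL}(Q,\alpha)$ and $\sigma_\beta$ is trivial on $G\times\SL_r$ (because $\sigma_\beta$ is a product of determinant characters and the determinants are $1$ there).

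For the converse, suppose $s$ is $G\times\SL_r$-invariant, i.e.\ $f$ is a $G\times\SL_r$-invariant regular function on $\Rep(Q,\alpha)$. Because $\textnormal{GL}(Q,\alpha)/(G\times\SL_r)\cong\complex^*$, the function $f$ transforms under all of $\textnormal{GL}(Q,\alpha)$ by \emph{some} character, and it suffices to check that this character is $\sigma_\beta$. Equivalently, it is enough to test against the one-parameter subgroup $t\mapsto(\,\textnormal{Id}_G\,,\,t\cdot\textnormal{Id}_{\complex^r}\,)$ and against elements that generate the ambiguity. Here I would use the explicit form: conjugating $V=(\phi^p_i)$ by $(\textnormal{Id}_G, t\cdot\textnormal{Id}_r)$ scales only the arrows entering the vertex $r$, namely $\phi^p_{C(\lambda^p)}\mapsto t\cdot\phi^p_{C(\lambda^p)}$, by the conjugation rule $gV=(A(ha)V(a)A(ta)^{-1})_a$. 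On the other hand, $\sigma_\beta$ evaluated on this one-parameter subgroup is $t^{\sigma_\beta(r)\cdot r}=t^{-\ell r}$. So I must show $f((\textnormal{Id},t\,\textnormal{Id})\cdot V)=t^{\ell r}f(V)$.

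The cleanest way to nail down this scalar, rather than computing with $f$ directly, is to invoke the saturation/known-generators input already cited in the paper: by \cite[Section 2]{DerksenWeymanCombinatorics} (used above in the proof body of Section \ref{QuiverProof}), the space $\SI(Q,\alpha)_{\sigma_\beta}=H^0(Y_{\alpha,\beta},L_Y)$ is spanned by the functions $\det d^{\square}_W$ for $W\in\Rep(Q,\beta)$, and these are honestly $\textnormal{GL}(Q,\alpha)$-semi-invariant of weight $\sigma_\beta$. So it suffices to observe that the $G\times\SL_r$-invariant functions of the relevant degree are exactly the span of the $\det d^{\square}_W$: but the ring of $G\times\SL_r$-invariants on $\Rep(Q,\alpha)$ is graded by the residual $\complex^*=\textnormal{GL}(Q,\alpha)/(G\times\SL_r)$-action, the degree-$n$ piece is $\SI(Q,\alpha)_{\sigma_{n\beta}}$ together with possibly other $\complex^*$-weight spaces, and one checks that no nonzero $G\times\SL_r$-invariant function can have $\complex^*$-weight other than a nonnegative multiple of $-\ell r$ — for instance because any such function restricts to a semi-invariant on $\Rep(Q,\alpha)_{\textnormal{inj}}$, and pushing forward along the quotient map $f$ of Lemma \ref{FlagsAreAGITQuotientOfQuiverReps} identifies it with a section of a line bundle on $\prod_p X^p$ whose $\SL_r$-equivariant structure is pinned down by the $\textnormal{Pic}$ computation there. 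The weight on $L_{\sigma_\beta}$ and on $\tilde{\LL}_\lambda$ must then match, forcing the $\complex^*$-weight to be precisely $-\ell r$ in degree $1$.

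The main obstacle I anticipate is the last step: cleanly ruling out ``stray'' $\complex^*$-weights among the $G\times\SL_r$-invariants, i.e.\ showing the $G\times\SL_r$-invariant ring has no negative $\complex^*$-weights and that its weight-$(-\ell r)$ part is all of $\SI(Q,\alpha)_{\sigma_\beta}$. This is really the statement that every $G\times\SL_r$-invariant section of $L_{\sigma_\beta}$ descends through the flag-quotient to a section of $\tilde{\LL}_\lambda$ on $X$, which is the content of the next lemma in the paper; so in practice I would either prove Lemma \ref{LineBundlesAgree1} together with \ref{LineBundlesAgree1}'s successor, or — more economically — note that $\textnormal{GL}(Q,\alpha)/(G\times\SL_r)$ is connected so the transforming character of $f$ is uniquely determined by its differential, and that differential is forced by restricting to a single vertex where the $\complex^*$ acts by a known weight, combined with the fact that $\SI(Q,\alpha)_{\sigma_\beta}\ne 0$ (guaranteed by our standing hypothesis $\dim\SI(Q,\alpha)_{\sigma_\beta}=2$ via Proposition \ref{GITForQuivers}) pins the normalization. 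That reduces everything to the bookkeeping of characters, which is routine.
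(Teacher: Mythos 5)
Your forward direction is fine (it is immediate for any subgroup acting equivariantly), though your parenthetical justification is off: $\sigma_\beta$ is \emph{not} trivial on $G\times\SL_r$, since $G=\prod_{p,i}\textnormal{Aut}(\complex^{\delta^p_i})$ consists of full general linear groups and $\sigma_\beta$ restricts there to $\prod_{p,i}\det(g^p_i)^{b(\lambda^p)_i}$. The real problem is the converse, where your argument has a genuine gap that you yourself flag but never close. A $G\times\SL_r$-invariant section need not ``transform under all of $\textnormal{GL}(Q,\alpha)$ by some character'': the space of such invariants is a rational representation of the residual torus $\textnormal{GL}(Q,\alpha)/(G\times\SL_r)\cong\complex^*$ and hence decomposes into weight spaces, but an individual invariant section is a priori a sum of vectors of different weights. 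What must be shown is that this residual action is trivial, and neither of your proposed routes does that: invoking the spanning set $\det d^{\square}_W$ of $\SI(Q,\alpha)_{\sigma_\beta}$ is circular (it describes the $\textnormal{GL}(Q,\alpha)$-semi-invariants, which is exactly what you are trying to identify with the $G\times\SL_r$-invariants), and ``the differential of the character is forced'' presupposes the very fact that $f$ is a single weight vector of a predetermined weight.

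The missing idea --- which is essentially the whole of the paper's proof --- is a group-theoretic decomposition rather than a weight computation. Given $\bar g=(\bar g^p_i)\times\bar g_r\in\textnormal{GL}(Q,\alpha)$, pick an $r$th root $t$ of $\det\bar g_r$ and write $\bar g=\bar g^1\bar g^2$, where $\bar g^1=(t\cdot\textnormal{Id})_{x\in Q_0}$ is the diagonal central scalar and $\bar g^2=\bar g^1{}^{-1}\bar g$. Then $\bar g^2\in G\times\SL_r$, while $\bar g^1$ acts trivially on $\Rep(Q,\alpha)$ (conjugation by a global scalar is trivial) and acts on the fibers of $L_{\sigma_\beta}$ by $t^{-\sum_x\alpha(x)\sigma_\beta(x)}=t^{\langle\alpha,\beta\rangle_Q}=1$, using the codimension condition (\ref{ClassicalCodimCond}). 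Hence $\bar g^1$ fixes every section of $L_{\sigma_\beta}$, so $\bar g\cdot s=\bar g^1\cdot(\bar g^2\cdot s)=s$ for any $G\times\SL_r$-invariant $s$. Equivalently: the coset of $(\textnormal{Id}_G,t\cdot\textnormal{Id}_r)$ generating the residual $\complex^*$ is also represented by $(t\cdot\textnormal{Id}_G,t\cdot\textnormal{Id}_r)$, which acts trivially on all sections. This is the one-line observation your sketch circles around but never lands on, and without it the converse is not proved.
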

\begin{proof}
Necessity is immediate. For sufficiency, suppose $\bar{g}\in\text{GL}(Q,\alpha)$. We may write this element as $\bar{g}=\left(\times_{p=1}^{s}\times_{i=1}^{C(\lambda^p)}\bar{g}^p_i\right)\times \bar{g}_r$. Let $t$ be an $r$th root of $\det\bar{g}_r$ and write
\begin{equation*}
\bar{g}^1=\left(\times_{p=1}^{s}\times_{i=1}^{C(\lambda^p)}t\cdot\text{Id}_{\complex^{\delta^p_i}}\right)\times\left(t\cdot\text{Id}_{\complex^{r}}\right)\textnormal{ and }\bar{g}^2=\left(\times_{p=1}^{s}\times_{i=1}^{C(\lambda^p)}\frac{\bar{g}^p_i}{t}\right)\times\frac{\bar{g}_r}{t},
\end{equation*}
so that $\bar{g}=\bar{g}^1\cdot\bar{g}^2$. Since $\bar{g}^2\in G\times\SL_r$ and $\bar{g}^1$ acts trivially on sections of $L_{\sigma_{\beta}}$ (by (\ref{ClassicalCodimCond})), if $s$ is $G\times\SL_r$ invariant, we have $\bar{g}\cdot s=s$, as desired.
\end{proof}

\begin{Lemma}\label{LineBundlesAgree2}
Let $0<\delta_1<...<\delta_C<r$ be integers, let
\begin{equation*}
H=\Hom(\complex^{\delta_1},\complex^{\delta_2})\times...\times\Hom(\complex^{\delta_{C-1}},\complex^{\delta_{C}})\times\Hom(\complex^{\delta_C},\complex^r),
\end{equation*}
and $H_{\textnormal{inj}}$ the locus where $\phi_1,...,\phi_C$ are all injective. Observe that $H_{\textnormal{inj}}$ has a natural conjugation action of $\GG\times\textnormal{SL}_r$ where $\GG:=\prod_{i=1}^{C}\textnormal{Aut}(\complex^{\delta_i})$. For $i=1,...,C$, let $\GG$ act trivially on $\Grass(\delta_i,\complex^r)$ and its ample generator $\OO(1)$, and let $\SL_r$ act in the usual way on both of these. If $f_i:H_{\textnormal{inj}}\rightarrow\Grass(\delta_i,\complex^r)$ is the $\GG\times\SL_r$-equivariant map sending $(\phi_1,...,\phi_C)$ to $\textnormal{Im}(\phi_C\circ...\circ\phi_{i})$, then the pullback along $f_i$ of $\OO(1)$ is $\GG\times\SL_r$ equivariantly isomorphic to $L_i$ on $H_{\textnormal{inj}}$. Here we denote by $L_i$ the $\GG\times\SL_r$ equivariant bundle whose underlying bundle is trivial and whose action is given by $g\cdot(\phi,z)=(g\cdot\phi,(\det{g_i})^{-1}z)$.
\end{Lemma}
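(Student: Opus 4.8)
The plan is to realize $f_i^*\OO(1)$ as the dual of a determinant line bundle, and then to read off its $\GG\times\SL_r$-equivariant structure from an explicit trivialization built out of the composite maps defining $f_i$. First I would invoke the standard identification of the ample generator of a Grassmannian: on $\Grass(\delta_i,\complex^r)$ there is a $\SL_r$-equivariant isomorphism $\OO(1)\cong(\det\UU)^{\vee}$, where $\UU\subseteq\OO\otimes\complex^r$ is the rank-$\delta_i$ tautological subbundle, and $\GG$ acts trivially on all of these by hypothesis. The map $f_i\colon H_{\textnormal{inj}}\to\Grass(\delta_i,\complex^r)$ is $\GG\times\SL_r$-equivariant, since $f_i(g\cdot\phi)$ depends only on the $\SL_r$-component $g_r$ of $g$ and equals $g_r\cdot f_i(\phi)$; pulling back thus gives a $\GG\times\SL_r$-equivariant isomorphism $f_i^*\OO(1)\cong(\det f_i^*\UU)^{\vee}$.

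Next I would trivialize $f_i^*\UU$. Its fiber over $\phi=(\phi_1,\dots,\phi_C)$ is $\textnormal{Im}(\Psi_\phi)\subseteq\complex^r$, where $\Psi_\phi:=\phi_C\circ\cdots\circ\phi_{i+1}\circ\phi_i\colon\complex^{\delta_i}\to\complex^r$; on $H_{\textnormal{inj}}$ the map $\Psi_\phi$ is injective with matrix entries polynomial in the $\phi_j$, so $(\phi,v)\mapsto\Psi_\phi(v)$ gives a morphism of vector bundles $H_{\textnormal{inj}}\times\complex^{\delta_i}\to f_i^*\UU$ which is a fiberwise isomorphism, hence an isomorphism of bundles. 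The key computation is then the behavior under the conjugation action: writing $(g\cdot\phi)_j=g_{j+1}\circ\phi_j\circ g_j^{-1}$ with $g_{C+1}:=g_r$, the intermediate automorphisms telescope,
\begin{equation*}
\Psi_{g\cdot\phi}=(g_r\phi_Cg_C^{-1})(g_C\phi_{C-1}g_{C-1}^{-1})\cdots(g_{i+1}\phi_ig_i^{-1})=g_r\,\Psi_\phi\,g_i^{-1},
\end{equation*}
so that the left factor $g_r$ matches the $\SL_r$-action already present on $\UU$ and cancels, while the right factor $g_i^{-1}$ survives. Hence the trivialization is $\GG\times\SL_r$-equivariant once $H_{\textnormal{inj}}\times\complex^{\delta_i}$ is given the action $g\cdot(\phi,v)=(g\cdot\phi,g_iv)$; call this equivariant bundle $E_i$.

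Finally I would conclude $f_i^*\OO(1)\cong(\det E_i)^{\vee}$ equivariantly, and since $E_i$ has rank $\delta_i$ with $g_i\in\textnormal{Aut}(\complex^{\delta_i})$ acting on the fiber $\complex^{\delta_i}$, the induced action on $\det E_i=\bigwedge^{\delta_i}E_i$ is multiplication by $\det(g_i)$; dualizing yields the action $g\cdot(\phi,z)=(g\cdot\phi,(\det g_i)^{-1}z)$, which is precisely $L_i$. There is no serious obstacle here: the content is almost entirely the equivariance check for the trivialization $H_{\textnormal{inj}}\times\complex^{\delta_i}\cong f_i^*\UU$, which reduces to the telescoping identity above, and that identity is exactly what singles out the single factor $g_i$ of $\GG$ (and kills the role of $g_r$, once the canonical linearization of $\OO(1)$ is used). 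The codimension condition (\ref{ClassicalCodimCond}) plays no role in this lemma; it enters only afterwards, when the $L_i$ are assembled into $L_{\sigma_\beta}$.
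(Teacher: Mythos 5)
Your proposal is correct and follows essentially the same route as the paper: trivialize the pullback of the tautological subbundle via $(\phi,v)\mapsto(\phi_C\circ\cdots\circ\phi_i)(v)$, check equivariance (your telescoping identity $\Psi_{g\cdot\phi}=g_r\Psi_\phi g_i^{-1}$ is the computation the paper leaves implicit), then take determinants and dualize to land on $L_i$. Nothing further is needed.
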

\begin{proof}
Let $\SSS_i$ be the universal subbundle on $\Grass(\delta_i,\complex^r)$, endowed with an equivariant structure by allowing $\GG$ to act trivially and $\SL_r$ to act in the obvious way. Endow also the trivial rank $i$ bundle $H_{\text{inj}}\times\complex^i$ with the action $g\cdot(\phi,v)=(g\cdot\phi,g_i v)$. One has a $\GG\times\SL_r$ equivariant isomorphism $\rho:H_{\text{inj}}\times\complex^{i}\rightarrow f^*\SSS_i$ which sends $(\phi,v)$ to $(\phi_{C}\circ...\circ\phi_i)(v)$ in $\SSS|_{f_i(\phi)}$. Thus, $\det\rho$ is a $\GG\times\SL_r$ equivariant isomorphism of $H_{\text{inj}}\times\complex$ (action given by $g\cdot(\phi,z)=(g\cdot\phi,(\det g_i)z)$) with $f_i^*\OO(-1)$. The assertion follows.
\end{proof}

From Lemma \ref{LineBundlesAgree2}, one deduces $f^*\tilde{\LL}^{\otimes n}_{\lambda}$ is $G\times\SL_r$ equivariantly isomorphic to $L^{\otimes n}_{\sigma_{\beta}}$. Thus, using \ref{LineBundlesAgree1} in the first step below, we have:
\begin{multline}\label{LineBundlesAgree3}
H^0(Y_{\alpha,\beta},L^{\otimes n}_Y)=H^0(\Rep(Q,\alpha)^{\sigma_{\beta}-SS},L^{\otimes n}_{\sigma_{\beta}})^{G\times \SL_r}\\=H^0(X^{SS},\tilde{\LL}^{\otimes n}_{\lambda})^{\SL_r}=H^0(\MM_{\lambda},\LL^{\otimes n}_{\lambda}).
\end{multline}
It follows that $L_Y=\LL_{\lambda}$. 

\begin{Corollary}\label{QuiverInvariantsGiveTensorInvariants}
\cite{ShermanKTT} If $\dim(V^*_{\lambda^1}\otimes...\otimes V^*_{\lambda^s})^{\SL_r}=2$, then $\dim(V^*_{n\lambda^1}\otimes...\otimes V^*_{n\lambda^s})^{\SL_r}=n+1$ for all positive integers $n$.
\end{Corollary}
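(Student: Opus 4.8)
The plan is to read Corollary \ref{QuiverInvariantsGiveTensorInvariants} off of Theorem \ref{QuiverKTT} by transporting both the hypothesis and the conclusion through the polarized-moduli-space identification $(Y_{\alpha,\beta},L_Y)\cong(\MM_\lambda,\LL_\lambda)$ built in this section; the substance of the argument is already in place, and what remains is bookkeeping.

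First I would produce the quiver input. Since $\dim(V^*_{\lambda^1}\otimes\cdots\otimes V^*_{\lambda^s})^{\SL_r}=2$ is nonzero, the tensor product has invariants, so by the remark following (\ref{ClassicalCodimCond}) there is an integer $\ell$ for which $\lambda^1,\dots,\lambda^s$ satisfy the running hypotheses of this section together with the codimension condition (\ref{ClassicalCodimCond}): concretely one takes $\ell=\tfrac1r\sum_p|\lambda^p|$, the requirement that no part of any $\lambda^p$ exceed $\ell$ then being forced by nonvanishing of the invariants via a standard box-count (the partition $\widehat{\lambda^p}$ with $V^*_{\lambda^p}\cong V_{\widehat{\lambda^p}}$ as $\SL_r$-representations has $r\lambda^p_1-|\lambda^p|$ boxes and occurs in $\bigotimes_{p'\ne p}V_{\lambda^{p'}}$, whence $r\lambda^p_1-|\lambda^p|\le\sum_{p'\ne p}|\lambda^{p'}|$). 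Fix such an $\ell$ and let $Q$, $\alpha$, $\beta$ be the $s$-partial flag quiver and dimension vectors constructed above; then $\langle\alpha,\beta\rangle_Q=0$ by (\ref{ClassicalCodimCond}).

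Next I would chain together the identifications. By Proposition \ref{GITForQuivers} one has $H^0(Y_{\alpha,\beta},L_Y^{\otimes n})=\SI(Q,\alpha)_{\sigma_{n\beta}}$; combined with (\ref{LineBundlesAgree3}) and the isomorphism $H^0(\MM_\lambda,\LL_\lambda^{\otimes n})=(V^*_{n\lambda^1}\otimes\cdots\otimes V^*_{n\lambda^s})^{\SL_r}$ recalled above, this yields
\[
\SI(Q,\alpha)_{\sigma_{n\beta}}\;\cong\;(V^*_{n\lambda^1}\otimes\cdots\otimes V^*_{n\lambda^s})^{\SL_r}\qquad\text{for every }n\ge 1 .
\]
Specializing to $n=1$ gives $\dim\SI(Q,\alpha)_{\sigma_\beta}=2$. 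As $\langle\alpha,\beta\rangle_Q=0$, Theorem \ref{QuiverKTT} applies and gives $\dim\SI(Q,\alpha)_{\sigma_{n\beta}}=n+1$ for all positive integers $n$; reading the displayed isomorphism in the other direction produces $\dim(V^*_{n\lambda^1}\otimes\cdots\otimes V^*_{n\lambda^s})^{\SL_r}=n+1$, as desired.

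I do not expect a genuine obstacle here: the hard input is Theorem \ref{QuiverKTT} itself, and the moduli-space identification $(Y_{\alpha,\beta},L_Y)\cong(\MM_\lambda,\LL_\lambda)$ was carried out earlier in this section. The only points requiring a moment of care are (a) the existence of a legitimate $\ell$, handled above, and (b) the observation that the displayed isomorphism holds uniformly in $n$, so that the single numerical hypothesis at $n=1$ suffices to feed Theorem \ref{QuiverKTT} and then propagates to all $n$. Both are routine given the setup already developed.
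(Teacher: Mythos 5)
Your proposal is correct and follows essentially the same route as the paper: chain the identifications $H^0(Y_{\alpha,\beta},L_Y^{\otimes n})=\SI(Q,\alpha)_{\sigma_{n\beta}}$ from Proposition \ref{GITForQuivers} with (\ref{LineBundlesAgree3}) and $H^0(\MM_{\lambda},\LL_{\lambda}^{\otimes n})=(V^*_{n\lambda^1}\otimes\cdots\otimes V^*_{n\lambda^s})^{\SL_r}$, then feed the $n=1$ hypothesis into Theorem \ref{QuiverKTT}. Your extra care about the existence of a valid $\ell$ is a point the paper only asserts in passing, but it does not change the argument.
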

\begin{proof}
The left hand side of (\ref{LineBundlesAgree3}) is $\SI(Q,\alpha)_{\sigma_{n\beta}}$ by Proposition \ref{GITForQuivers} while the right hand side is $(V^*_{n\lambda^1}\otimes...\otimes V^*_{n\lambda^s})^{\SL_r}$. The corollary now follows from Theorem \ref{QuiverKTT}.
\end{proof}
\end{section}

\bibliographystyle{amsalpha}
\bibliography{C:/Users/dothetimmywalk/Desktop/Grad/QuiverIkenmeyerNice/QuiverIkenmeyer}{}

\providecommand{\bysame}{\leavevmode\hbox to3em{\hrulefill}\thinspace}
\providecommand{\MR}{\relax\ifhmode\unskip\space\fi MR }
\providecommand{\MRhref}[2]{%
  \href{http://www.ams.org/mathscinet-getitem?mr=#1}{#2}
}
\providecommand{\href}[2]{#2}
\begin{thebibliography}{KTW04}

\bibitem[Bel06]{Bel06}
Prakash Belkale, \emph{Geometric proof of {H}orn and {S}atutration
  conjectures}, Journal of Algebraic Geometry \textbf{15} (2006), 133--173.

\bibitem[Bel07]{Bel07}
\bysame, \emph{Geometric proof of a conjecture of {F}ulton}, Advances in
  Mathematics \textbf{216} (2007), 346--357.

\bibitem[Bri05]{BrionLecturesGeometryOfFlags}
Michel Brion, \emph{Lectures on the geometry of flag varieties}, Topics in the
  Cohomological Study of Algebraic Varieties (2005), 33--85.

\bibitem[DN89]{DrezetNarasimhan}
Jean-Marie Drezet and Mudumbai~Seshachalu Narasimhan, \emph{Groupe de {P}icard
  des vari\'{e}ti\'{e}s de modules de fibr\'{e}s semistables sur les courbes
  alg\'{e}briques}, Inventiones Mathematicae \textbf{97} (1989), 53--94.

\bibitem[DW00]{DerksenWeymanSemiInvariants}
Harm Derksen and Jerzy Weyman, \emph{Semi-invariants of quivers and saturation
  for {L}ittlewood-{R}ichardson coefficients}, Journal of the AMS \textbf{13}
  (2000), 467--479.

\bibitem[DW02]{DerksenWeymanPolynomiality}
\bysame, \emph{On {L}ittlewood-{R}ichardson polynomials}, Journal of Algebra
  \textbf{255} (2002), 247--257.

\bibitem[DW11]{DerksenWeymanCombinatorics}
\bysame, \emph{The combinatorics of quiver representations}, Annales de
  l'Institut Fourier \textbf{61} (2011), 1061--1131.

\bibitem[Ike12]{Ikenmeyer}
Christian Ikenmeyer, \emph{Small {L}ittlewood-{R}ichardson coefficients},
  ar{X}iv eprint (2012), http://arxiv.org/abs/1209.1521.

\bibitem[Kin94]{KingFDAlgebras}
Alastair King, \emph{Moduli of representations of finite dimensional algebras},
  The Quarterly Journal of Mathematics \textbf{45} (1994), 515--530.

\bibitem[KT99]{KT}
Allen Knutson and Terence Tao, \emph{The honeycomb model of {GL}n tensor
  products {I}: Proof of the saturation conjecture}, Journal of the AMS
  \textbf{12} (1999), 1055--1090.

\bibitem[KTT04]{KTT}
Ronald King, Christophe Tollu, and Fr{\'{e}}d{\'{e}}ric Toumazet,
  \emph{Stretched {L}ittlewood-{R}ichardson and {K}ostka coefficients}, CRM
  Proceedings and Lecture Notes \textbf{34} (2004), 99--112.

\bibitem[KTW04]{KTW2}
Allen Knutson, Terence Tao, and Christopher Woodward, \emph{The honeycomb model
  of {GL}n tensor products {II}: Puzzles determine facets of the
  {L}ittlewood-{R}ichardson cone}, Journal of the AMS \textbf{17} (2004),
  19--48.

\bibitem[Muk03]{MukaiInvariants}
Shigeru Mukai, \emph{An introduction to invariants and moduli}, Cambridge
  Studies in Advanced Mathematics, no.~81, Cambridge University Press, 2003.

\bibitem[Pau96]{PaulyEspaces}
Christian Pauly, \emph{Espaces de modules de fibr\'{e}s paraboliques et blocs
  conformes}, Duke Mathematical Journal \textbf{84} (1996), 217--236.

\bibitem[Rud97]{RudakovStability}
Alexei Rudakov, \emph{Stability for an abelian category}, Journal of Algebra
  \textbf{197} (1997), 231--245.

\bibitem[Sch92]{Schofield}
Aidan Schofield, \emph{General representations of quivers}, Proceedings of the
  London Mathematical Society \textbf{3} (1992), 46--64.

\bibitem[Sch08]{SchmittGITAndDecoratedBundles}
Alexander H.~W. Schmitt, \emph{Geometric invariant theory and decorated
  principal bundles}, Zurich Lectures in Advanced Mathematics, European
  Mathematical Society, 2008.

\bibitem[She15]{ShermanKTT}
Cass Sherman, \emph{Geometric proof of a conjecture of {K}ing, {T}ollu, and
  {T}oumazet}, ar{X}iv eprint (2015), http://arxiv.org/abs/1505.06551.

\end{thebibliography}
\end{document}